\newcommand{\R} {\ensuremath{\mathbb{R}}}
\newcommand{\C} {\ensuremath{\mathbb{C}}}
\newcommand{\Z} {\ensuremath{\mathbb{Z}}}
\renewcommand{\o}[1]{\overline{#1}}
\newcommand{\dq}{\overline{\partial}}
\DeclareMathOperator{\Reg}{Reg}
\DeclareMathOperator{\Sing}{Sing}
\DeclareMathOperator{\Dom}{Dom}
\DeclareMathOperator{\codim}{codim}
\newtheorem {satz} {Satz} [section]
\newtheorem {lem} [satz] {Lemma}
\newtheorem {cor} [satz] {Corollary}
\newtheorem {thm} [satz] {Theorem}
\numberwithin{equation}{section}
\DeclareMathOperator{\supp}{supp}
\title[Koppelman formulas on affine cones] 
{Koppelman formulas on affine cones over smooth projective complete intersections}
\author{R. L\"ark\"ang}
\address{Richard L\"ark\"ang, Department of Mathematics, University of Wuppertal, Gau{\ss}str. 20, 42119 Wuppertal, Germany, and Department of
  Mathematics, Chalmers University of Technology and the University of Gothenburg, 412 96 G\"oteborg, Sweden.}
\email{larkang@chalmers.se}
\author{J. Ruppenthal}
\address{Jean Ruppenthal, Department of Mathematics, University of Wuppertal, Gau{\ss}str. 20, 42119 Wuppertal, Germany.}
\email{ruppenthal@uni-wuppertal.de}
\date{\today}
\subjclass[2000]{32A26, 32A27, 32B15, 32C30, 32W05}
\begin{document}

\begin{abstract} 
In the present paper, we study regularity of the Andersson--Samuelsson Koppelman integral operator on affine cones
over smooth projective complete intersections.
Particularly, we prove $L^p$- and $C^\alpha$-estimates, and compactness of the operator, when the degree is sufficiently small.
As applications, we obtain homotopy formulas for different $\dq$-operators acting on $L^p$-spaces of forms, including the case $p=2$
if the varieties have canonical singularities. We also prove that the $\mathcal{A}$-forms introduced by Andersson--Samuelsson
are $C^\alpha$ for $\alpha < 1$.
\end{abstract}

\maketitle

~\\[-16mm]
\section{Introduction}

In $\C^n$, it is classical that the $\dq$-equation $\dq f = g$, where $g$ is a $\dq$-closed $(0,q)$-form,
can be solved locally for example if $g$ is in $C^\infty$, $L^p$ or $g$ is a current, where the solution
$f$ is of the same class (or in certain cases, also with improved regularity).
To prove the existence of solutions which are smooth forms or currents, or to obtain
$L^p$-estimates for smooth solutions, one can use Koppelman formulas, see for example, \cite{Ra},\cite{LiMi}.

On singular varieties, it is no longer necessarily the case that the $\dq$-equation is locally
solvable over these classes of forms, as for example on the variety $\{ z_1^4 + z_2^5 + z_2^4 z_1 = 0 \}$,
there exist smooth $\dq$-closed forms which do not have smooth $\dq$-potentials,
see e.g. \cite[Beispiel~1.3.4]{RuDipl}.

Solvability of the $\dq$-equation on singular varieties has been
studied in various articles in recent years, for example describing
in certain senses explicitly the obstructions to solving the $\dq$-equation
in $L^2$, see \cite{FOV},\cite{OV2},\cite{RDuke}.
Among these and other results, one can find examples when the $\dq$-equation
is not always locally solvable in $L^p$, for example when $p = 1$ or $p = 2$.

On the other hand, in \cite{AS}, Andersson and Samuelsson define on an arbitrary
pure dimensional singular variety $X$ sheaves $\mathcal{A}^X_q$ of $(0,q)$-currents,
such that the $\dq$-equation is locally solvable in $\mathcal{A}^X$, and the solution is
given by Koppelman formulas, i.e., there exists operators $\mathcal{K} : \mathcal{A}_{q}^X \to \mathcal{A}_{q-1}^X$
and $\mathcal{P} : \mathcal{A}^X_0 \to \mathcal{O}_X$, such that
if $\varphi \in \mathcal{A}_q^X$, then
\begin{equation} \label{eq:koppel}
    \varphi = \dq \mathcal{K}\varphi + \mathcal{K} (\dq \varphi),
\end{equation}
locally in the sense of distributions if $q \geq 1$, and 
\begin{equation} \label{eq:koppel2}
    \varphi = \mathcal{P} \varphi + \mathcal{K} (\dq \varphi),
\end{equation}
locally in the sense of distributions if $q = 0$,
where the operators $\mathcal{K}$ and $\mathcal{P}$ are given as principal value integral operators
\begin{equation} \label{eq:AS-def}
    \mathcal{K}\varphi(z) = \int K(\zeta,z) \wedge \varphi(\zeta) \text{ and }
    \mathcal{P}\varphi(z) = \int P(\zeta,z) \wedge \varphi(\zeta),
\end{equation}
for some integral kernels $K(\zeta,z)$ and $P(\zeta,z)$.
On $X^* = \Reg X$, the regular part of $X$, the sheaf $\mathcal{A}^X_q$ coincides with the sheaf of smooth $(0,q)$-forms.
For the cases when the $\dq$-equation is not solvable for smooth forms, 
the $\mathcal{A}$-sheaves must necessarily have singularities along $\Sing X$,
but from the definition of the $\mathcal{A}$-sheaves, it is not very apparent
how the singularities of the $\mathcal{A}$-sheaves are in general.
In order to take better advantage of the results in \cite{AS}, one would
like to know more precisely how the singularities of the
$\mathcal{A}$-sheaves look like.
In particular, it would be interesting to know whether for certain varieties,
the $\mathcal{A}$-sheaves are in fact smooth, or, say, $C^k$ also over $\Sing X$.

Our motivation for studying the $\dq$-equation using Koppelman formulas is two-fold:
First of all, as in the smooth case, using integral formulas for studying the $\dq$-equation
has the advantage that it can be used for understanding the $\dq$-equation over various function spaces,
like forms which are $C^k$, $C^\infty$, H\"older, $L^p$ or currents. 
As mentioned above, a large part of the study of the $\dq$-equation on singular
varieties has been restricted to $L^2$-spaces, while using integral formulas,
we can indeed obtain new results about solvability also in $L^p$-spaces for $p \neq 2$.
In addition, it is often easy to prove that integral operators are compact,
and indeed, we do indeed here obtain compact solution operators for the $\dq$-equation.

A second motivation is the following: the $\mathcal{A}$-sheaves in \cite{AS}
are defined by starting with smooth forms, applying Koppelman operators, multiplying
with smooth forms, applying Koppelman operators, and iterating this procedure a
finite number of times. We obtain here that for the varieties we study,
the $\mathcal{A}$-sheaves are contained in the sheaves of forms with $C^\alpha$ coefficients,
for any $\alpha < 1$, see Corollary~\ref{cor:asheaves} below.

In this article, we consider Koppelman type integral formulas for the $\dq$-equation on affine cones
over smooth projective complete intersections of low enough degree. More precisely,
let $X = \{ \zeta \in \C^N \mid h(\zeta) = 0 \}$ be a subvariety of dimension $n = N-\nu$,
where $h = (h_1,\dots,h_\nu)$ is a tuple of homogeneous polynomials of degrees $(d_1,\dots,d_\nu)$.
We let $d := d_1 + \dots + d_\nu$ be the degree of $X$, and assume that $d\leq 2n+\nu-1$ and that 
$X$ has an isolated singularity at the origin $\{ 0 \}$. Equivalently, if $Y \subseteq \mathbb{P}^{N-1}$
is a smooth projective complete intersection of degree $d$ defined by $Y := \{ [z] \in \mathbb{P}^{N-1} \mid h(z) = 0 \}$,
then, $X$ is the affine cone over $Y$.
In \cite{LR}, we studied similar problems for the special case of the so-called $A_1$-singularity,
which is the subvariety $X = \{ \zeta \in \C^3 \mid \zeta_1^2 + \zeta_2^2 + \zeta_3^2 = 0 \}$.

For general varieties, the operators \eqref{eq:AS-def} from \cite{AS} only exist as principal value
operators, and hence require some smoothness of the input, but our first main result is that for
the varieties we consider in this article, we can extend the operators to work on $L^p$-forms.
For precise definitions of what we mean by $L^p$-forms, $C^\alpha$-forms and $C^{0,1}$-functions on $D'$ and $D$,
see Section \ref{sec:lp-forms}.

\begin{thm}\label{thm:main1}
    Assume that $X \subseteq \C^N$ is the affine cone over a smooth projective complete intersection $Y \subseteq \mathbb{P}^{N-1}$
    of degree $d \leq 2n+\nu-1$, where $n = \dim X$ and $\nu = \codim X = N-n$.
    Let $\Omega \subset\subset \Omega' \subset \subset \C^N$
    be two strictly pseudoconvex domains, and let $D := X \cap \Omega$ and $D' := X \cap \Omega'$.
    Let $\mathcal{K}$ and $\mathcal{P}$ be the integral operators from \cite{AS} on $D'$, as here defined in
    \eqref{eq:AS1} and \eqref{eq:AS2}, and assume that 
    $$\frac{2n}{2n-(d-\nu)} < p \leq \infty$$ and $q \in \{1,\dots,n\}$.
    Then:
    
    \medskip
    (i) $\mathcal{K}$ gives a bounded compact linear operator from $L^p_{0,q}(D')$ to $L^p_{0,q-1}(D)$.
    
    \medskip
    (ii) $\mathcal{K}$ gives a continuous compact linear operator from $L^\infty_{0,q}(D')$ to $C^\alpha_{0,q-1}(\overline{D})$
    for $0 \leq \alpha < 1$.

    \medskip
    (iii) $\mathcal{P}$ gives a continuous compact linear operator from $L^1_{0,0}(D')$ to $C^{0,1}(\overline{D})$.
\end{thm}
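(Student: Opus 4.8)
The plan is to reduce everything to pointwise size and regularity estimates for the integral kernels $K(\zeta,z)$ and $P(\zeta,z)$, pulled back to a resolution of $X$ and written in suitable coordinates adapted to the conical structure. The affine cone $X$ over a smooth complete intersection $Y$ is, away from the singular point $0$, a $\C^*$-bundle over $Y$, so a natural model near the origin is provided by the blow-up $\pi \colon \wt{X} \to X$ of the origin, which is smooth, with exceptional divisor $Y$ and normal bundle the tautological line bundle $\OO_Y(-1)$; equivalently $\wt{X}$ is the total space of $\OO_Y(-1)$. I would first recall from \cite{AS} (and the explicit form \eqref{eq:AS1}--\eqref{eq:AS2} set up in the body of the paper) that $K$ and $P$ are built from a Hefer-type construction on a smooth modification, and that away from $\Sing X$ the kernels have the usual Cauchy--Fantappi\`e type singularity of order $2n-1$ along the diagonal. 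The new point is to control their behaviour uniformly as $z$ and/or $\zeta$ approach $0$.

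The key step is therefore a uniform kernel estimate: there is a constant $C$ so that, with $r = |\zeta|$, $\rho = |z|$ and $\delta = |\zeta - z|$ the relevant ``quasi-distance'' on the cone, one has
\begin{equation*}
  |K(\zeta,z)| \leq C \,\frac{\delta^{\,d-\nu}}{r^{\,d-\nu}}\,\frac{1}{\delta^{\,2n-1}}
  \quad\text{(with the obvious symmetric version),}
\end{equation*}
and an analogous bound, improved by one power of $\delta$, for $P$; the extra factor $\delta^{d-\nu}/r^{d-\nu}$ is exactly what the degree hypothesis $d \le 2n+\nu-1$ is designed to make harmless. Granting this, part (i) follows from the Schur test: one estimates $\int_{D'} |K(\zeta,z)|\, dV(\zeta)$ and $\int_{D} |K(\zeta,z)|\, dV(z)$ and checks both are bounded uniformly, using the degree condition and the hypothesis $p > 2n/(2n-(d-\nu))$ precisely to absorb the singular weight $r^{-(d-\nu)}$ against the Jacobian $r^{2n-1}\,dr$ (in geodesic polar coordinates on the cone), so that the relevant one-dimensional integrals $\int_0^1 r^{(2n-1)-(d-\nu)}\,dr$ and their $L^p$-weighted analogues converge. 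Compactness in (i) is then obtained by the standard truncation argument: approximate $K$ by kernels $K_\varepsilon$ supported away from $\{\zeta = z\}$ and away from $\{0\}$, which give compact (indeed, Hilbert--Schmidt-type or at least norm-limit-of-finite-rank-approximable) operators, and show $\|\mathcal{K} - \mathcal{K}_\varepsilon\| \to 0$ using the same Schur bounds restricted to the shrinking bad set. For (ii) and (iii) one differentiates the kernel in $z$ once and shows the resulting kernel is still absolutely integrable in $\zeta$ uniformly in $z$ (now using the stronger hypothesis $p=\infty$, resp. $p=1$ for $\mathcal{P}$, where the extra $\delta$ gained by $P$ buys the full Lipschitz estimate); the $C^\alpha$, resp. $C^{0,1}$, bound then follows from the classical lemma that an operator whose kernel satisfies $|K(\zeta,z)| \lesssim \delta^{-(2n-1)}$ and $|\nabla_z K(\zeta,z)| \lesssim \delta^{-2n}$ maps $L^\infty$ into $C^\alpha$ for all $\alpha<1$ — the only modification being to carry the conical weight through, which is legitimate by the same computation as in (i) together with the fact that the weight is bounded on $\overline{D}$ away from $0$ and integrably singular at $0$. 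Compactness in (ii) and (iii) follows from Arzel\`a--Ascoli, since the image of a bounded set is then bounded and equicontinuous in $C^\alpha(\overline D)$ for $\alpha' > \alpha$ (resp.\ in $C^{0,1}$, use compactness of $C^{0,1} \hookrightarrow C^\alpha$).

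The main obstacle I expect is the uniform kernel estimate near $0$. Away from the origin the analysis is the classical one on the smooth manifold $X^*$; near the origin one must track the precise order of vanishing/blow-up of every factor of $K$ and $P$ after pulling back by $\pi$. The section defining $X$ and its Hefer form contribute powers of the tautological coordinate of magnitude governed by $d = \sum d_j$, the $\dq$ of the weight contributes the geometry of $Y$ (here one uses smoothness of $Y$, i.e. that the only singularity of $X$ is the cone point), and the Cauchy--Fantappi\`e part contributes the diagonal singularity; the bookkeeping is most transparent in the chart where $\wt X$ looks like $Y \times \C$ with coordinate $t$, $\pi(t,\cdot)$ being multiplication by $t$, where $|\zeta| \sim |t|$ and the homogeneous degree-$d_j$ polynomials $h_j$ factor as $t^{d_j}$ times a local defining function of $Y$. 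Carrying this out is essentially the same computation as in \cite{LR} for the $A_1$-singularity, but one must verify that the homogeneity bookkeeping goes through for an arbitrary smooth complete intersection and produces exactly the exponent $d-\nu$; this is where the hypothesis $d \le 2n+\nu-1$ enters, guaranteeing $d-\nu \le 2n-1$ so that the conical weight is milder than the diagonal singularity and the Schur integrals converge for the stated range of $p$. Once the estimate is in place, parts (i)--(iii) are routine applications of the Schur test, the differentiated-kernel Hölder lemma, and Arzel\`a--Ascoli, respectively.
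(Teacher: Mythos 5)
Your overall strategy for (i) and (ii) -- reduce to a pointwise kernel bound of the form ``diagonal singularity of order $2n-1$ times a conical weight of order $d-\nu$'' and then run integral estimates -- is the right one, and the weight you predict is essentially correct: the structure form contributes $\|\zeta\|^{-(d-\nu)}$ and the Hefer forms contribute $\sum_{\gamma\le d-\nu}\|\zeta\|^{d-\nu-\gamma}\|z\|^{\gamma}$, so the kernel is a sum of terms $\lesssim (\|z\|/\|\zeta\|)^{\gamma}\,\|\zeta-z\|^{-(2n-1)}$. (The paper gets this purely from homogeneity of the minors of $\partial f/\partial\zeta$ and of the Hefer forms, and does all integral estimates directly on $X$ via the comparison $\mathrm{vol}(X\cap B_r(z))\sim r^{2n}$; no blow-up is needed, which is one of the points of the paper as compared to \cite{LR}.) However, the symmetric Schur test as you state it fails: while $\sup_z\int_{D'}|K(\zeta,z)|\,dV(\zeta)$ is indeed uniformly bounded, the other marginal $\sup_\zeta\int_{D}|K(\zeta,z)|\,dV(z)$ blows up like $\|\zeta\|^{-\gamma}$ as $\zeta\to 0$, since the factor $\|\zeta\|^{-\gamma}$ is constant in the $z$-integration. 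Had both marginals been bounded you would get boundedness on $L^p$ for \emph{all} $p$, which is false (test on normalized indicators of small balls around the cone point: the operator is unbounded for $p<2n/(2n-\gamma)$). What is actually needed is the asymmetric/weighted version, i.e.\ the H\"older splitting $|f|\,k_\gamma=(|f|^p\delta^{-(2n-1)})^{1/p}\cdot(\|z\|^{p^*\gamma}\|\zeta\|^{-p^*\gamma}\delta^{-(2n-1)})^{1/p^*}$, and the hypothesis $p>2n/(2n-(d-\nu))$ enters precisely as the condition $\gamma p^*<2n$ making the second factor integrable. Your phrase ``$L^p$-weighted analogues'' hints at this, but as written the mechanism is wrong and the role of the restriction on $p$ is not located correctly.

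The more serious gap is part (iii). A kernel bound for $P$ that is ``improved by one power of $\delta$,'' i.e.\ $|P|\lesssim\delta^{-(2n-2)}$ with one more power for $\nabla_zP$, cannot possibly yield a map $L^1(D')\to C^{0,1}(\overline D)$: an integral operator maps $L^1$ into $L^\infty$ only if its kernel is essentially bounded, so any diagonal singularity whatsoever ruins the statement. The reason (iii) holds is structural, not a matter of gaining one power: the projection kernel is $P=\omega_X\wedge\tilde P$ with $\tilde P$ built from $h\wedge g_n$, and $g_n=\dq\chi\wedge s\wedge(\dq s)^{n-1}$ carries the factor $\dq\chi(\zeta)$. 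Hence $P(\zeta,z)$ is supported, in $\zeta$, in a compact subset of $\Omega'\setminus\Omega$, which is bounded away both from the diagonal (for $z\in D$) and from the cone point; consequently $P$ is a smooth kernel with compact support in $\zeta$, and the $L^1\to C^{0,1}$ bound and compactness are immediate. Finally, for (ii) your differentiated-kernel lemma needs an extra word of care beyond ``carrying the weight through'': the $z$-dependent factor $\|z\|^{\gamma}$ is only Lipschitz at $z=0$, so the difference $|\|z\|^{\gamma}-\|w\|^{\gamma}|$ must be split off and estimated separately from the difference of the Bochner--Martinelli part (the paper does this with a four-region decomposition of $D$ according to the positions of $0$, $z$, $w$), and the resulting integrals only give $r(1+|\log r|)$ with $r=\|z-w\|$, which is why one obtains $C^\alpha$ for all $\alpha<1$ but not Lipschitz.
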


In particular, one obtains the following result about the $\mathcal{A}$-sheaves from \cite{AS}.

\begin{cor} \label{cor:asheaves}
    Let $X$ and $D$ be as in Theorem~\ref{thm:main1}, and let, as in \cite{AS}, $\mathcal{A}^X_{q}$ be the sheaf of currents
    which can be locally written as a finite sum of currents of the form 
    \begin{equation*}
        \xi_{\nu+1}\wedge (\mathcal{K}_\nu(\dots \xi_3 \wedge \mathcal{K}_2(\xi_2\wedge \mathcal{K}_1(\xi_1)))),
    \end{equation*}
    where each $\mathcal{K}_i$ is an integral operator as in Theorem \ref{thm:main1},
    mapping forms on $D_i' := \Omega_i \cap X$ to forms on $D_{i+1}'$,
    where $\Omega = \Omega_{\nu+1} \subset\subset \Omega_\nu \subset\subset \dots \subset\subset \Omega_1 \subset\subset \C^N$
    are strictly pseudoconvex domains, and $\xi_i$ are smooth forms on $D_i'$. Then
    \begin{equation*}
        \mathcal{A}^X_q(D) \subseteq C^\alpha_{0,q}(D)
    \end{equation*}
    for any $0 \leq \alpha < 1$.
\end{cor}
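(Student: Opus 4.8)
The plan is to deduce the corollary by iterating part~(ii) of Theorem~\ref{thm:main1}, which already contains all of the analytic work. Note first that the standing hypothesis $d\le 2n+\nu-1$ is equivalent to $d-\nu<2n$, so the lower endpoint $2n/(2n-(d-\nu))$ of the admissible range of exponents in Theorem~\ref{thm:main1} is finite, and hence $p=\infty$ always lies in that range; thus, for each operator $\mathcal{K}_i$ occurring in the statement, part~(ii) furnishes a continuous map $L^\infty_{0,*}(D_i')\to C^\alpha_{0,*}(\overline{D_{i+1}'})$ for every $0\le\alpha<1$ (parts~(i) and~(iii) are not needed here). The remaining ingredients are elementary: a $C^\alpha$-form on $\overline{D_{i+1}'}$ belongs to $L^\infty_{0,*}(D_{i+1}')$, the wedge product of a $C^\alpha$-form with a smooth form is again of class $C^\alpha$, and finite sums of $C^\alpha$-forms are of class $C^\alpha$.

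With these observations I would argue by induction on the number of integral operators applied. Put $T_0:=\xi_1$ and, for $1\le j\le\nu$, $T_j:=\xi_{j+1}\wedge\mathcal{K}_j(T_{j-1})$, so that the current displayed in Corollary~\ref{cor:asheaves} is precisely $T_\nu$, a current on $D_{\nu+1}'=D$. I claim that $T_{j-1}\in L^\infty_{0,*}(D_j')$ for every $1\le j\le\nu$, and moreover $\mathcal{K}_j(T_{j-1})\in C^\alpha_{0,*}(\overline{D_{j+1}'})$ for every $0\le\alpha<1$. For $j=1$ the first assertion holds because $\xi_1$ is a smooth, hence bounded, form on $D_1'$. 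Given $T_{j-1}\in L^\infty_{0,*}(D_j')$, Theorem~\ref{thm:main1}(ii) applied to $\mathcal{K}_j$ (which in the degenerate case that $T_{j-1}$ has antiholomorphic degree zero simply annihilates it, so that $T_j=0$) gives $\mathcal{K}_j(T_{j-1})\in C^\alpha_{0,*}(\overline{D_{j+1}'})$; wedging with the smooth form $\xi_{j+1}$ then gives $T_j\in C^\alpha_{0,*}(\overline{D_{j+1}'})\subseteq L^\infty_{0,*}(D_{j+1}')$, which is the first assertion for the index $j+1$ and closes the induction. Taking $j=\nu$ shows that $T_\nu\in C^\alpha_{0,q}(\overline{D})\subseteq C^\alpha_{0,q}(D)$; since $\mathcal{A}^X_q(D)$ consists of finite sums of such currents and $C^\alpha_{0,q}(D)$ is a vector space, the inclusion $\mathcal{A}^X_q(D)\subseteq C^\alpha_{0,q}(D)$ follows.

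There is no genuine obstacle here beyond bookkeeping: all the regularity has already been established in Theorem~\ref{thm:main1}, and the corollary is just its $\nu$-fold iteration. The one point that needs a little care is that at each stage the form fed into $\mathcal{K}_j$ must lie in $L^\infty$ of the exact domain $D_j'$ over which $\mathcal{K}_j$ integrates; this is why I take the smooth forms $\xi_i$ to be bounded on the $D_i'$, which is the natural reading of the hypothesis and is consistent with the germ-level definition of the $\mathcal{A}$-sheaves in \cite{AS}. Since $\mathcal{A}^X$ is a sheaf and membership in $C^\alpha_{0,q}(D)$ is a local condition, one could in any case reduce to that situation before running the induction.
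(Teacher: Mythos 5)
Your proposal is correct and is exactly the argument the paper intends: the corollary is stated as an immediate consequence of Theorem~\ref{thm:main1}(ii) (the paper gives no separate proof), obtained by the same induction---smooth forms are bounded, $\mathcal{K}_i$ maps $L^\infty$ into $C^\alpha\subseteq L^\infty$, and wedging with smooth $\xi_i$ preserves this. Your bookkeeping of the domains $D_i'$ and the degenerate degree-zero case is careful and consistent with the paper's setup.
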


Although by Theorem~\ref{thm:main1} the Koppelman operator $\mathcal{K}$ maps $L^p_{0,q}(D')$ to $L^p_{0,q-1}(D)$ for $p > 2n/\big(2n-(d-\nu)\big)$,
this does not necessarily imply that the $\dq$-equation is locally solvable in $L^p$ for such $p$,
since it is not necessarily the case that \eqref{eq:koppel} holds on $D$ for $\varphi \in L^p(D')$.
However, in order to describe when the Koppelman formula \eqref{eq:koppel} does indeed hold,
we first need to discuss various definitions of the $\dq$-operator on $L^p$-forms
on singular varieties. We let $D \subseteq X$ be some open set, and we let $\dq_{sm}$ be the $\dq$-operator
on smooth $(0,q)$-forms with support on $D^*=D\setminus\{0\}$ away from the singularity.
This operator has various extensions as a closed operator in $L^p_{0,q}(D)$.

One extension of the $\dq_{sm}$-operator is the maximal closed extension, i.e., the weak $\dq$-operator $\dq_w^{(p)}$
in the sense of currents, so if $g \in L^p_{0,q}(D)$, then $g \in \Dom \dq_w^{(p)}$ if $\dq g \in L^p_{0,q+1}(D)$
in the sense of distributions on $D$.
\footnote{This is what we take as definition of $\dq_w^{(p)}$ on $D$. However, to be precise, this definition only 
coincides with the maximal closed extension of $\dq_{sm}$ for $p \geq 2n/(2n-1)$, which is the only case of interest to us.
In general, that $\varphi$ lies in the domain of the maximal closed extension of $\dq_{sm}$ means that $\dq \varphi|_{D^*} \in L^p(D^*)$.
When $p \geq 2n/(2n-1)$, it then follows that $\dq \varphi \in L^p(D)$, see \cite[Satz~4.3.3]{RuThesis}.}
When it is clear from the context, we will drop the superscript $(p)$ in $\dq^{(p)}_w$,
and we will for example write $g \in \Dom \dq_w \subset L^p_{0,q}(D)$. For the $\dq_w$-operator, we obtain the following
result about the Koppelman formulas \eqref{eq:koppel} and \eqref{eq:koppel2}.

\begin{thm}\label{thm:main3}
    Let $X$, $D'$, $D$, $\mathcal{K}$ and $\mathcal{P}$ be as in Theorem~\ref{thm:main1}.
    Let $\varphi \in \Dom \dq_w \subseteq L^p_{0,q}(D')$, where 
    $$\frac{2n}{2n-(d-\nu+1)} \leq p \leq \infty$$
    and $q \in \{0,\dots,n\}$.
    Then
    \begin{eqnarray} \label{eq:dbarlp}
        \varphi &=& \left\{ \begin{array}{ll}
            \dq_w \mathcal{K}\varphi + \mathcal{K}\big( \dq_w \varphi\big) & \text{ if $q \geq 1$, } \\
            \mathcal{P} \varphi + \mathcal{K}\big( \dq_w \varphi\big) & \text{ if $q = 0$, }
        \end{array}\right.
    \end{eqnarray}
    in the sense of distributions on $D$.
\end{thm}

Note in particular, if $d \leq N-1 = n+\nu-1$, then \eqref{eq:dbarlp} holds in the important case $p=2$.
By \cite[Corollary~3.3]{Kol}, the condition $d \leq N-1$ means precisely that $X$ has canonical singularities,
which is an important class of singularities in the minimal model program. As we explain below, this result is
indeed optimal with respect to the condition on $d$ in the case $p=2$, since the $\dq_w$-equation is not
solvable for $(0,n-1)$-forms if $d \geq N$.

Another extension of the $\dq$-operator is the minimal closed extension, i.e., the
strong extension $\dq_s^{(p)}$ of $\dq_{sm}$, which is the graph
closure of $\dq_{sm}$ in $L^p_{0,q}(D) \times L^p_{0,q+1}(D)$, so
$\varphi \in \Dom \dq_s^{(p)} \subset L^p_{0,q}(D)$, if there exists a sequence of smooth 
forms $\{\varphi_j\}_j \subset L^p_{0,q}(D)$ with support away from the singularity, i.e.,
$$\supp \varphi_j \cap \{0\} = \emptyset,$$
such that
\begin{eqnarray*}
\varphi_j \rightarrow \varphi \ \ \ &\mbox{ in }& \ \ L^p_{0,q}(D),\\
\dq \varphi_j \rightarrow \dq \varphi \ \ \ &\mbox{ in }& \ \ L^p_{0,q+1}(D)\label{eq:dbars2}
\end{eqnarray*}
as $j\rightarrow \infty$.

For the strong $\dq$-operator, we obtain the following.

\begin{thm}\label{thm:main4}
    Let $X$, $D'$, $D$ and $\mathcal{K}$ be as in Theorem~\ref{thm:main1}, and assume that $X$
    has degree $d < 2n+\nu-1$, and that $D$ has smooth boundary.
    Let $\varphi\in \Dom \dq_s \subseteq L^p_{0,q}(D')$, $1\leq q \leq n$, where 
    $$\frac{2n}{2n-(d-\nu)} < p \leq 2n.$$
    Then
    \begin{eqnarray*}
        \mathcal{K} \varphi &\in& \Dom\dq_s \subset L^p_{0,q-1}(D).
    \end{eqnarray*}
\end{thm}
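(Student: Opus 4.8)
The plan is to deduce the statement from Theorem~\ref{thm:main1}(i) together with the Koppelman formula of Theorem~\ref{thm:main3}, by approximation. Let me think about what's available. Since $\varphi \in \Dom\dq_s$, there is a sequence $\varphi_j$ of smooth forms with support away from $0$ converging to $\varphi$ in $L^p_{0,q}(D')$, with $\dq\varphi_j \to \dq_s\varphi$ in $L^p_{0,q+1}(D')$. For each such smooth $\varphi_j$, the classical Koppelman formula from \cite{AS} applies, so on $D$ we have $\varphi_j = \dq\mathcal{K}\varphi_j + \mathcal{K}(\dq\varphi_j)$. Now I would want to pass to the limit. By Theorem~\ref{thm:main1}(i), $\mathcal{K}$ is bounded $L^p_{0,q}(D') \to L^p_{0,q-1}(D)$, so $\mathcal{K}\varphi_j \to \mathcal{K}\varphi$ in $L^p_{0,q-1}(D)$; and since $p$ satisfies $2n/(2n-(d-\nu)) < p$, which is (slightly) stronger than the hypothesis $2n/(2n-(d-\nu+1)) \le p$ of Theorem~\ref{thm:main3}, $\mathcal{K}$ is also bounded on the forms one degree up, so $\mathcal{K}(\dq\varphi_j) \to \mathcal{K}(\dq_s\varphi)$ in $L^p_{0,q}(D)$. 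Hence $\dq\mathcal{K}\varphi_j = \varphi_j - \mathcal{K}(\dq\varphi_j)$ converges in $L^p_{0,q}(D)$ to $\varphi - \mathcal{K}(\dq_s\varphi)$, which by Theorem~\ref{thm:main3} (applicable since $\varphi \in \Dom\dq_s \subseteq \Dom\dq_w$ and $\dq_w\varphi = \dq_s\varphi$) equals $\dq_w\mathcal{K}\varphi = \dq\mathcal{K}\varphi$ in the sense of distributions.

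So far this only gives $\mathcal{K}\varphi \in \Dom\dq_w$, which we already knew. To upgrade to $\Dom\dq_s$, the key point is that the approximating sequence $\psi_j := \mathcal{K}\varphi_j$ should itself serve (after a cutoff) as the sequence witnessing $\mathcal{K}\varphi \in \Dom\dq_s$: we have $\psi_j \to \mathcal{K}\varphi$ and $\dq\psi_j \to \dq\mathcal{K}\varphi$ in $L^p$, but $\psi_j$ need not be smooth nor supported away from $0$. However, the operator $\mathcal{K}$ from \cite{AS} applied to a smooth form is smooth on $\Reg X = X \setminus \{0\}$ (the kernel $K(\zeta,z)$ is a classical Cauchy--Fantappiè type kernel with a controlled singularity along the diagonal and along $\Sing X$), so each $\psi_j$ is smooth on $D^* = D \setminus \{0\}$. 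One then multiplies $\psi_j$ by a cutoff $\chi_\varepsilon$ equal to $1$ outside the ball of radius $\varepsilon$ around $0$ and vanishing near $0$; the standard way to control the error term $\dq\chi_\varepsilon \wedge \psi_j$ is via the $L^p$-estimates on $\mathcal{K}\varphi_j$ (and their behaviour near $0$), which is exactly where the hypotheses $d < 2n+\nu-1$, $p \le 2n$ and the smoothness of $\partial D$ are used: these ensure $\mathcal{K}\varphi_j$ decays fast enough (in an $L^p$ sense with a little room to spare) near the singularity that $\chi_\varepsilon \mathcal{K}\varphi_j \to \mathcal{K}\varphi_j$ in the graph norm as $\varepsilon \to 0$. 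A diagonal argument in $j$ and $\varepsilon$ then produces smooth forms with support away from $0$ converging to $\mathcal{K}\varphi$ in the graph norm, giving $\mathcal{K}\varphi \in \Dom\dq_s$.

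I would organize the proof as: (1) reduce to smooth $\varphi_j$ by the definition of $\dq_s$; (2) record that $\mathcal{K}\varphi_j$ is smooth on $D^*$ and apply the classical Koppelman formula; (3) take limits using Theorem~\ref{thm:main1}(i) (in degrees $q$ and $q+1$, permitted by the $p$-range) to identify $\dq\mathcal{K}\varphi$ in $L^p$; (4) perform the cutoff argument near $0$, using the sharp $L^p$-mapping bounds for $\mathcal{K}$ and the strict inequality $d < 2n+\nu-1$ together with $p \le 2n$ to kill the $\dq\chi_\varepsilon$-error in $L^p$, and smooth mollification away from $0$ to replace $\chi_\varepsilon\mathcal{K}\varphi_j$ by genuinely smooth forms; (5) diagonalize.

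The main obstacle, and the step requiring the most care, is step (4): controlling the commutator-type error $\dq\chi_\varepsilon \wedge \mathcal{K}\varphi_j$ and showing it tends to $0$ in $L^p_{0,q}(D)$ as $\varepsilon \to 0$, uniformly enough in $j$ to diagonalize. This needs a quantitative estimate on how $\mathcal{K}$ maps into weighted $L^p$-spaces (or $L^p$ with an extra power of $|\zeta|$), so that the loss coming from $|\dq\chi_\varepsilon| \lesssim 1/\varepsilon$ on a set of measure $\sim \varepsilon^{2n}$ is beaten — this is precisely why the hypotheses are strengthened to $d < 2n+\nu-1$ (strict) and $p \le 2n$ compared to Theorem~\ref{thm:main1}. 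The smoothness of $\partial D$ enters to make sure that mollifying $\mathcal{K}\varphi_j$ near $\partial D$ does not spoil the $L^p$-convergence, or alternatively is used to extend across the boundary. Everything else is a routine limiting argument built on the already-established boundedness in Theorem~\ref{thm:main1}(i) and the formula in Theorem~\ref{thm:main3}.
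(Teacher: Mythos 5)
Your first half coincides with the paper's argument: take smooth $\varphi_j$ supported away from $0$ approximating $\varphi$ in the graph norm, apply the Koppelman formula to each $\varphi_j$, and pass to the limit using Theorem~\ref{thm:main1}(i) in degrees $q$ and $q+1$ to conclude $\mathcal{K}\varphi\in\Dom\dq_w$ with $\dq_w\mathcal{K}\varphi=\varphi-\mathcal{K}(\dq_s\varphi)$. One slip here: you claim the hypothesis $p>2n/(2n-(d-\nu))$ is stronger than the hypothesis $p\geq 2n/(2n-(d-\nu+1))$ of Theorem~\ref{thm:main3}; the inequality goes the other way (since $2n-(d-\nu+1)<2n-(d-\nu)$, the threshold $2n/(2n-(d-\nu+1))$ is the \emph{larger} one), so Theorem~\ref{thm:main3} is not available in the full range of $p$ allowed here. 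This is harmless, because the limit identity already identifies $\dq_w\mathcal{K}\varphi$ without invoking Theorem~\ref{thm:main3} --- which is also how the paper proceeds.

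The genuine gap is in your step (4), which you correctly flag as the crux but do not carry out, and the mechanism you sketch for it would not work. With a standard cutoff $\chi_\varepsilon$ satisfying $|\dq\chi_\varepsilon|\lesssim 1/\varepsilon$ on a shell of volume $\sim\varepsilon^{2n}$ one has $\|\dq\chi_\varepsilon\|_{L^{2n}}=O(1)$, not $o(1)$; H\"older against an $L^\infty$-bound on $\mathcal{K}\varphi_j$ then gives $\|\dq\chi_\varepsilon\wedge\mathcal{K}\varphi_j\|_{L^p}\lesssim\varepsilon^{2n/p-1}$, which fails exactly at the endpoint $p=2n$ that the theorem allows. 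Moreover, the paper has no weighted $L^p$-estimates for $\mathcal{K}$ and no ``decay of $\mathcal{K}\varphi_j$ near $0$'' is available or needed; and the strict inequality $d<2n+\nu-1$ plays no role in any such estimate --- it only guarantees that the interval $(2n/(2n-(d-\nu)),\,2n]$ of admissible $p$ is nonempty. The paper's actual mechanism is: by Theorem~\ref{thm:main1}(ii) the forms $\mathcal{K}\varphi_j$ lie in $C^\alpha_{0,q-1}(\overline{D})\subset L^\infty_{0,q-1}(D)$, i.e.\ they are \emph{bounded} up to the singularity, and Lemma~\ref{lem:cut-off2} states that for $p\leq 2n$ and $D$ with smooth boundary, a graph-norm approximation by merely bounded forms in $\Dom\dq_w^{(p)}$ already certifies membership in $\Dom\dq_s^{(p)}$. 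The cutoff is performed once and for all inside Lemma~\ref{lem:cut-off2} via the logarithmic cutoffs $\mu_k=\rho_k(\log(-\log r(\|\zeta\|)))$ of \eqref{eq:cutoff1}, for which $\|\dq\mu_k\|_{L^{2n}}\to 0$ by \eqref{eq:cutoff2} and Lemma~\ref{lem:integral-log}; this is the idea missing from your sketch. Your outline is repaired by (a) recording the $L^\infty$-bound on $\mathcal{K}\varphi_j$ from Theorem~\ref{thm:main1}(ii), and (b) replacing the naive cutoff by the logarithmic one, or simply citing Lemma~\ref{lem:cut-off2}.
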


As a corollary, we thus obtain that the Koppelman formula holds also for the $\dq_s$-operator.

\begin{cor}\label{cor:main4}
    Let $X$, $D'$, $D$ and $\mathcal{K}$ be as in Theorem~\ref{thm:main1}, and assume that $X$
    has degree $d < 2n+\nu-1$.
    Let $\varphi \in L^p_{0,q}(D')$ such that $\varphi \in \Dom \dq_s$, where $q \in \{1,\dots,n\}$ and $$\frac{2n}{2n-(d-\nu)} <p \leq 2n.$$
    Then
    \begin{eqnarray*}
        \varphi = \dq_s \mathcal{K}\varphi + \mathcal{K}\big( \dq_s \varphi\big)
    \end{eqnarray*}
    in the sense of distributions on $D$.
\end{cor}

For $p = 2$, this result is optimal with respect to $d$ in the same sense as for $\dq_w$ in Theorem~\ref{thm:main3}.

The setting in \cite{AS} is rather different compared to this article, since here, we are mainly concerned
with forms on $X$ with coefficients in $L^p$, while in \cite{AS}, the type of forms
considered, denoted $\mathcal{W}^X_q$, are generically smooth, and have in a certain sense ``holomorphic singularities''
(like for example the principal value current $1/f$ of a holomorphic function $f$),
but there is no direct growth condition on the singularities.
For the precise definition of the class $\mathcal{W}^X_q$, we refer to \cite{AS}. In the setting of \cite{AS},
the $\dq$-operator $\dq_X$ considered there is different from the ones considered
here, $\dq_s$ and $\dq_w$. For currents in $\mathcal{W}^X_q$, one can define the product with certain ``structure forms''
$\omega_X$ associated to the variety. A current $\mu \in \mathcal{W}^X_q$ lies in $\Dom \dq_X$ if there exists a current
$\tau \in \mathcal{W}^X_{q+1}$ such that $\dq (\mu \wedge \omega) = \tau \wedge \omega$ for all structure forms $\omega$.
(To be precise, this formulation works when $X$ is Cohen-Macaulay, as is the case for example here, when $X$ is a 
complete intersection).

Combining our results about $\mathcal{K}$ and the $\dq_w$- and $\dq_s$-operator
with some properties about the $\mathcal{W}^X$-sheaves,
we obtain results similar to Theorem~\ref{thm:main4} for the $\dq_X$-operator,
answering in part a question in \cite{AS} (see the paragraph at the end of page 288 in \cite{AS}).

\begin{thm}\label{thm:main5}
Let $X$, $D'$, $D$ and $\mathcal{K}$ be as in Theorem~\ref{thm:main1}, and assume that $X$
has degree $d < 2n+\nu-1$.
Let $\varphi\in \Dom \dq^{(p)}_s \cap \mathcal{W}^X_q(D')$, $1\leq q \leq n$, where
$2n/(2n-(d-\nu)) <p \leq 2n.$
Then
\begin{eqnarray*}
\mathcal{K} \varphi &\in& \Dom\dq_X.
\end{eqnarray*}
\end{thm}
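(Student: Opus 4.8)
The plan is to leverage Theorem~\ref{thm:main4}, which already gives $\mathcal{K}\varphi \in \Dom\dq_s \subseteq L^p_{0,q-1}(D)$ for $\varphi \in \Dom\dq_s$ in the stated range of $p$, together with Corollary~\ref{cor:main4}, which gives the Koppelman identity $\varphi = \dq_s \mathcal{K}\varphi + \mathcal{K}(\dq_s\varphi)$. The only additional content to establish is that $\mathcal{K}\varphi$ and $\dq_s(\mathcal{K}\varphi)$ in fact lie in the sheaves $\mathcal{W}^X_{q-1}$ and $\mathcal{W}^X_{q}$, and that the defining relation for $\Dom\dq_X$ holds, namely that $\dq\big((\mathcal{K}\varphi)\wedge\omega\big) = \big(\dq_s\mathcal{K}\varphi\big)\wedge\omega$ for every structure form $\omega = \omega_X$ on $D$. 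The main step is therefore to pass from the $\dq_s$-identity (which holds as an equality of $L^p$-forms, hence of currents on $D$) to the $\dq_X$-relation (an equality of currents obtained after wedging with $\omega_X$ and applying the ambient $\dq$).

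First I would record the membership $\mathcal{K}\varphi \in \mathcal{W}^X_{q-1}(D)$. Since $\varphi \in \mathcal{W}^X_q(D')$ and $\mathcal{K}$ is one of the Koppelman operators of \cite{AS}, this is essentially built into the construction of the $\mathcal{W}^X$-sheaves in \cite{AS}: applying $\mathcal{K}$ to a current of the class $\mathcal{W}^X$ produces again a current of class $\mathcal{W}^X$ — indeed this closure property is exactly what makes \eqref{eq:koppel}, \eqref{eq:koppel2} meaningful in that setting. So I would cite the relevant statement of \cite{AS} directly. Next, since $\varphi \in \Dom\dq_s^{(p)}$, Theorem~\ref{thm:main4} gives $\mathcal{K}\varphi \in \Dom\dq_s^{(p)} \subseteq L^p_{0,q-1}(D)$; in particular $\dq_s\mathcal{K}\varphi \in L^p_{0,q}(D)$, and by Corollary~\ref{cor:main4},
\begin{equation*}
\dq_s\mathcal{K}\varphi = \varphi - \mathcal{K}(\dq_s\varphi)
\end{equation*}
as currents on $D$. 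The right-hand side is a difference of a $\mathcal{W}^X_q$-current ($\varphi$ itself) and $\mathcal{K}$ applied to $\dq_s\varphi$; the latter lies in $\mathcal{W}^X_q$ again by the closure property of \cite{AS}, provided $\dq_s\varphi \in \mathcal{W}^X_{q+1}(D')$. To secure this I would invoke the known fact that for $\varphi \in \mathcal{W}^X_q$, the current $\dq_s\varphi$ — which equals the weak $\dq$ of $\varphi$ on $X$ once one knows $\dq_s\varphi = \dq_w\varphi$ in this range (from the results leading to Theorem~\ref{thm:main3}) — is again in the $\mathcal{W}^X$-class; this is part of the structure of $\mathcal{W}^X$ worked out in \cite{AS}. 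Hence $\dq_s\mathcal{K}\varphi \in \mathcal{W}^X_q(D)$ as well.

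It remains to verify the $\dq_X$-relation. Fix a structure form $\omega = \omega_X$ on $D$. Because $\mathcal{K}\varphi \in \Dom\dq_s^{(p)}$, there is a sequence of smooth forms $\psi_j$ with support away from $0$ such that $\psi_j \to \mathcal{K}\varphi$ and $\dq\psi_j \to \dq_s\mathcal{K}\varphi$ in $L^p$. Wedging with $\omega$ (which is smooth on $X^* = \Reg X$ and has controlled growth at $0$, being a structure form) and using that $\omega$ is $\dq$-closed on $X^*$, the Leibniz rule on $X^*$ gives $\dq(\psi_j\wedge\omega) = (\dq\psi_j)\wedge\omega$; passing to the limit in the sense of currents on $D$ — here one needs that $\psi_j\wedge\omega \to (\mathcal{K}\varphi)\wedge\omega$ and $(\dq\psi_j)\wedge\omega \to (\dq_s\mathcal{K}\varphi)\wedge\omega$ as currents, which follows from the $L^p$-convergence combined with the integrability properties of $\omega$ along $\Sing X$ — yields
\begin{equation*}
\dq\big((\mathcal{K}\varphi)\wedge\omega\big) = \big(\dq_s\mathcal{K}\varphi\big)\wedge\omega
\end{equation*}
as currents on $D$. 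Since $\dq_s\mathcal{K}\varphi \in \mathcal{W}^X_q(D)$ and this holds for all structure forms $\omega$, this is precisely the statement $\mathcal{K}\varphi \in \Dom\dq_X$, with $\dq_X(\mathcal{K}\varphi) = \dq_s\mathcal{K}\varphi$.

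The main obstacle I anticipate is the last limiting argument: justifying that wedging the $L^p$-approximating sequence with the structure form $\omega_X$ and passing to the current limit is legitimate. This requires a quantitative bound on the singularity of $\omega_X$ near $0$ — essentially that $\omega_X \in L^{p'}_{\mathrm{loc}}$ for the conjugate exponent, or more precisely that the pairing $\int \psi_j\wedge\omega\wedge(\text{test form})$ converges — and here the hypothesis $d < 2n+\nu-1$ and the lower bound on $p$ are exactly what one needs; the isolated-singularity and complete-intersection structure of $X$ (so $X$ is Cohen–Macaulay and $\omega_X$ is the Grothendieck-type structure form with an explicit pole order at $0$) should make this a matter of comparing exponents, analogous to the integrability estimates already used in the proofs of Theorems~\ref{thm:main1}–\ref{thm:main4}. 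A secondary subtlety is confirming that $\dq_s\varphi = \dq_w\varphi$ on $D'$ in the stated $p$-range so that the $\mathcal{W}^X$-membership of $\dq_s\varphi$ can be quoted from \cite{AS}; but this is covered by the hypotheses since $2n/(2n-(d-\nu)) > 2n/(2n-(d-\nu+1))$, putting us inside the range of Theorem~\ref{thm:main3}.
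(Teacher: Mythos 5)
Your proposal is correct and follows essentially the same route as the paper, which reduces Theorem~\ref{thm:main5} to Corollary~\ref{cor:main4} (giving $\dq\mathcal{K}\varphi\in L^p$ via the $\dq_s$-approximating sequence) and then passes the Leibniz identity $\dq(\psi_j\wedge\omega_X)=(\dq\psi_j)\wedge\omega_X$ to the limit using H\"older's inequality with $\omega_X\in L^{p^*}_{n,0}(D)$ for $p^*=p/(p-1)<2n/(d-\nu)$. The paper only sketches this by reference to the proof of Theorem~1.6 in \cite{LR}, but the two ingredients it highlights are exactly the ones you identified.
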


When $X$ is as in Theorem~\ref{thm:main5}, then the structure form on $X$ will locally behave
like $1/\|\zeta\|^{d-\nu}$ in $\C^n$, see \eqref{eq:str-form-estimate}.
Thus, $\omega \in L^{p^*}_{n,0}(D)$ for all $1 \leq p^* < 2n/(d-\nu)$.
The conclusions of Theorem~\ref{thm:main5} mean that
\begin{equation*}
    \dq (\mathcal{K} \varphi \wedge \omega_X) = (\dq \mathcal{K} \varphi) \wedge \omega_X.
\end{equation*}
Since $\varphi \in \Dom \dq_s \subseteq L^p(D')$, by the Koppelman formula for $\dq_w$ on $L^p$,
we get that $\dq \mathcal{K} \varphi \in L^p(D)$. As $p > 2n/(2n-(d-\nu))$, we have $p^* := p/(p-1) < 2n/(d-\nu)$,
and so, by the discussion above, $\omega \in L^{p^*}_{n,0}(D)$.
Thus, the products
$\mathcal{K}\varphi \wedge \omega_X$ and $(\dq \mathcal{K} \varphi) \wedge \omega_X$ exist
(almost-everywhere) pointwise and lie in $L^1_{n,*}(D)$ by H\"older's inequality.

The proof of Theorem~\ref{thm:main5} is essentially the same as the proof of Theorem~1.6 in \cite{LR}.
The only differences are that here, as described above, one uses Corollary~\ref{cor:main4} to conclude that $\dq \mathcal{K} \varphi \in L^p$,
and at the point where H\"older's inequality is used, one uses that if $p^* := p/(p-1)$, then as explained
above, $\omega_X \in L^{p^*}(D)$.

When $X = \{ \zeta_1^2 + \zeta_2^2 + \zeta_3^2 = 0 \} \subseteq \C^3$ is the so-called $A_1$-singularity,
we proved in \cite{LR} that if $\varphi \in \Dom \dq_w^{(2)}$, then $\mathcal{K}\varphi \in \Dom \dq_s^{(2)}$,
and as a consequence of this result and the Koppelman formula for $\dq_w^{(2)}$, we then obtained that
$\dq_w^{(2)}$ and $\dq_s^{(2)}$ coincide on the $A_1$-singularity.
In Theorem~\ref{thm:main4}, we require the stronger assumption that $\varphi$ is in $\Dom \dq_s^{(p)}$,
and we can then not conclude that $\dq_w^{(p)}$ and $\dq_s^{(p)}$ coincide on the varieties that we consider.
Theorem~\ref{thm:main4} is however strong enough to obtain the Koppelman formula for $\dq_s$.

\bigskip 
The following results about solvability of the $\dq$-equation $\dq f = g$, when $\dq g = 0$,
on affine homogeneous varieties with an isolated singularity can be found in earlier works.
By the phrase that "there exists $f$" in a certain function space for $g$ with certain properties,
we shall always mean that $\dq f=g$.
Throughout this discussion, we let as above, $X \subseteq \C^N$ be an analytic variety of pure dimension $n$,
and let $D \subset \subset D' \subset\subset X$ be two domains, which are intersections of $X$ with strictly pseudoconvex 
domains in $\C^N$ (in some cases $D$ and $D'$ should be intersections of $X$ with balls in $\C^N$).
Recall also, as mentioned above:
when $X$ is the affine cone of a smooth projective complete intersection in $\mathbb{P}^{N-1}$ of degree $d$,
then $X$ has a canonical singularity at $0$ if and only if $d \leq N-1$. 

First of all, Henkin and Polyakov \cite{HP} showed that for any complete intersection, if $g \in C^\infty_{0,q}(D)$,
then there exists $f \in C^\infty_{0,q-1}(D^*)$, where $D^* = D \setminus \Sing X$. 

We now consider the $\dq_w$-operator.
If $X$ is an arbitrary variety, which is Cohen-Macaulay (so in particular, if $X$ is a complete intersection),
with an isolated singularity at $0$, then Forn{\ae}ss, {\O}vrelid, Vassiliadou showed that for $g \in L^2_{0,q}(D)$,
where $1 \leq q \leq n-2$, there exist $f \in L^2_{0,q-1}(D)$, and the case $q = n$ is treated in \cite{OR}
(also without the Cohen-Macaulay assumption).

For weighted homogeneous varieties, if $g$ has compact support in $D$, and $g \in L^p_{0,q}(D^*)$,
then for $1 \leq q \leq n$, and $1 \leq p \leq \infty$, by \cite{RZ2}, there exists $f \in L^p_{0,q-1}(D^*)$.
If $X$ is homogeneous with isolated singularities, $g \in L^\infty_{0,1}(D)$, still with compact support,
then $f \in C_{0,0}^\alpha(D)$ for any $\alpha < 1$.
If $X$ is as in Theorem~\ref{thm:main1}, and $d = n$, then for $g \in L^p_{0,q}(D)$, where $1 \leq p \leq \infty$ and
$q \leq n-2$, there exist $f \in L^p_{0,q-1}(D)$ by \cite{RMatZ}, Theorem 6.5.

If we now turn to the $\dq_s$-operator, by \cite{RSerre}, $(L^{2,loc}_{0,q},\dq_s)$ is a resolution of
$\mathcal{O}_{X,x}$ if and only if $x \in X$ has rational singularities.
Thus, if $D \subset\subset D'$, and $D'$ is strictly pseudoconvex, if $g \in \ker \dq_s \subseteq L^2_{0,q}(D')$,
there thus exists $f \in L^2_{0,q-1}(D)$ if $(X,0)$ is a rational singularity.
On the other hand, if $(X,0)$ is not a rational singularity, then there exist a neighborhood $D'$ of
$0$ and $g \in L^2_{0,q-1}(D)$ such that there does not exist any $f \in L^2_{0,q-1}(D)$ for any neighborhood
$D$ of $0$.
When $(X,x)$ is Cohen-Macaulay, then $(X,x)$ has rational singularities if and only if $(X,x)$ has canonical singularities,
see \cite[p. 85]{Kol}.

Finally, one can also compare solvability with respect to the $\dq_s$ and $\dq_w$-operator.
By \cite{RDuke} and \cite{RSerre}, the $L^{2,loc}$-cohomologies on $X$ coincide when one considers either the $\dq_s$-
or the $\dq_w$-operator for $X$ being the affine cone of a smooth projective complete intersection, because the blow-up
of the origin is then a resolution of singularities of $X$, and the exceptional divisor has multiplicity $1$.
Thus, also the $\dq_w$-equation is locally solvable for all $g \in L^2_{0,q}$ and all $1 \leq q \leq n$ if and only
if $d \leq N-1$.

To conclude, we see that when $g$ does not have compact support, our results about solvability
in $L^p_{0,q}$ for $p \neq 2$, appear new when $d \neq n$ or $q \geq n-1$. 

In regards to optimality of our results, for $p = 2$, we see by the discussion above, that the $\dq_w$-
and the $\dq_s$-equation are locally solvable for all $g \in \ker \dq_s \subseteq L^2_{0,q}$ or $g \in \ker \dq_w \subseteq L^2_{0,q}$
when $q \neq n-1$ for any affine cone of a smooth projective complete intersection of arbitrary degree,
and for $q = n-1$ if and only if $d \leq N-1$.
Thus, for $p = 2$, Theorem~\ref{thm:main3} and Corollary~\ref{cor:main4} are optimal in the sense that
they give solutions for all $1 \leq q \leq n$ exactly for those affine cones over a smooth projective
complete intersection for which solutions always exist.

We mention here how our results and methods are related to the ones in \cite{LR}. In \cite{LR},
we obtained results similar to the results here, for the special case of the so-called $A_1$-singularity
$X = \{ \zeta \in \C^3 \mid \zeta_1^2 + \zeta_2^2 + \zeta_3^2 = 0 \}$.
The methods are however a bit different. In \cite{LR}, we used a two-sheeted branched covering $\pi : \C^2 \to X$
of $X$ to essentially reduce the problem to similar problems in the case when $X = \C^2$.
Here now, we instead consider the problem, and estimate integrals directly on the variety $X \subseteq \C^N$,
using some basic estimates regarding radial integrals in Section~\ref{sect:basic-estimates}.
Since we do not make any assumptions on the variety in Section~\ref{sect:basic-estimates}
(except for being of pure dimension), such a method has the hope of working more generally.
In addition, even though we could in \cite{LR} reduce the problem to integral operators
in $\C^2$, the method still became rather involved, as we first of all needed to consider weighted
$L^p$-spaces on $\C^2$, and in addition, the integral kernels that we needed to study became 
rather complicated.

\bigskip
The present paper is organised as follows. We start by providing basic integral estimates
on arbitrary analytic varieties in Section \ref{sect:basic-estimates}, and the definition of
$C^\alpha$- and $L^p$-forms on singular spaces in Section \ref{sec:lp-forms}.
In Section \ref{sec:Iestimates}, we prove the relevant estimates for integral operators with isotropic isolated
poles on varieties with arbitrary singularities, while in Section \ref{sec:cut-off},
we study how $L^p$-forms on a singular variety can be approximated by smooth forms
(which is needed to apply the Andersson--Samuelsson homotopy formula).
Finally, in Section \ref{sec:main},
we recall the Koppelman formulas of Andersson--Samuelsson and prove the main theorems of this paper.

\section{Basic integral estimates on analytic varieties} \label{sect:basic-estimates}

Let $X\subset \C^N$ be an analytic variety of pure dimension $n$. 
We consider $X$ as a Hermitian complex space with the restriction of the standard metric from $\C^N$,
i.e., the regular part $X^*:=\Reg X$ of $X$ carries the induced Hermitian metric. 
With respect to the volume element induced by this metric, the singular part $\Sing X$ is a null set,
and we denote by $dV_X$ the extension to $X$ of the volume element on $X^*$.
Let $B_r(z)$ be the ball of radius $r>0$ centered at the point $z\in\C^N$.

\smallskip
\subsection{Estimates of radial functions on analytic varieties}

Let $f : Y \to \R_{\geq 0}$ be a positive measurable function on a measure space $(Y,\mu)$. We define
the distribution function of $f$ as
\begin{equation*}
    \lambda_f(t) := \mu(\{ y \in Y \mid f(y) \geq t \}).
\end{equation*}
Our use for distribution functions is the following result:
\begin{equation} \label{eq:estimate0}
    \int_Y f(y) d\mu(y) = \int_0^\infty \lambda_f(t) dt,
\end{equation}
provided the integral exists. The proof of \eqref{eq:estimate0} follows directly from
writing $f(y) = \int_0^{f(y)} dt$ in the left-hand side of \eqref{eq:estimate0},
and changing the order of integration.

We will now let $Y$ be the set $X \cap (B_{r_2}(z) \setminus \overline{B_{r_1}(z)})$ for $r_2 \geq r_1 \geq 0$.
We want to estimate integrals of the form
\begin{equation*}
    \int_Y \frac{1}{\|\zeta-z\|^\alpha} dV_X(\zeta),
\end{equation*}
where $\alpha\geq0$.
To do this, we begin by estimating the distribution function of $f(\zeta) = 1/\|\zeta-z\|^\alpha$ on $Y$.
First of all, we have by \cite{De}, Consequence~III.5.8, that
if we write
\begin{equation*}
    \int_{X \cap B_r(z)} dV_X(\zeta) = v(r,z) r^{2n},
\end{equation*}
then $v(r,z)$ is increasing in $r$.
We let $K$ be some compact subset of $X$ and let $R > 0$ be fixed.
Then there exists some $C$ such that
$v(r,z) \leq C$ for any $z \in K$ and $r < R$.

In addition, by \cite{De}, Theorem III.7.7,
there exists some constant $c$ such that $0< c \leq \lim_{r\to 0+} v(r,z)$ independently of $z \in X$.
Thus, for $z \in K$ and $0 \leq r \leq R$, we get that
there exists constants $c,C$ such that
\begin{equation}\label{eq:De2}
    0 < c \leq v(r,z) \leq C.
\end{equation}

Using \eqref{eq:De2}, we can estimate integrals of radial functions on a variety $X$ of dimension $n$
in terms the corresponding integral on $\C^n$.

\begin{lem} \label{lem:comparable}
    Let $X \subseteq \C^N$ be an analytic subvariety of pure dimension $n$.
    Let $K \subseteq X$ be compact, and let $z \in K \subseteq X$ and $R > 0$ be fixed.
    Assume that $f : X \to \R_{\geq 0}$ is of the form $f(\zeta) = g(|\zeta-z|)$
    for some function $g : \R_{\geq 0} \to \R_{\geq 0}$.
    Let $\tilde{f} : \C^n \to \R_{\geq 0}$
    be defined by $\tilde{f}(\zeta) = g(|\zeta|)$. Then, for $r \leq R$,
    \begin{equation*}
        c \int_{B_r(0)} \tilde{f}(\zeta) dV_{\C^n}(\zeta) \leq \int_{B_r(z)\cap X} f(\zeta) dV_X(\zeta)
        \leq C \int_{B_r(0)} \tilde{f}(\zeta) dV_{\C^n}(\zeta),
    \end{equation*}
    where $c$ and $C$ are the constants in \eqref{eq:De2}.
\end{lem}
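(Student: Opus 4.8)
The plan is to reduce both inequalities to a single change-of-variables identity for radial integrals, using the distribution function formula \eqref{eq:estimate0} and the bounds \eqref{eq:De2} on the volume density $v(r,z)$. Since $f(\zeta) = g(|\zeta - z|)$ depends only on the distance to $z$, I would first write, for a fixed $z \in K$ and $r \leq R$,
\begin{equation*}
    \int_{B_r(z)} f(\zeta)\, dV_X(\zeta) = \int_0^\infty \mu_X\big(\{\zeta \in X \cap B_r(z) : g(|\zeta - z|) \geq t\}\big)\, dt,
\end{equation*}
and the analogous identity on $\C^n$ with $\tilde f$ in place of $f$. So everything comes down to comparing the distribution functions of $g(|\cdot - z|)$ on $X \cap B_r(z)$ and of $g(|\cdot|)$ on $B_r(0) \subseteq \C^n$.

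The key observation is that the superlevel set $\{\zeta : g(|\zeta - z|) \geq t\} \cap B_r(z)$ is, for each $t$, a union of concentric spherical shells about $z$ intersected with $B_r(z)$; write $E_t := \{s \in [0,r) : g(s) \geq t\} \subseteq [0,r)$, so that the superlevel set on $X$ is $\{\zeta \in X : |\zeta - z| \in E_t\} \cap B_r(z)$, and on $\C^n$ it is $\{\zeta : |\zeta| \in E_t\} \cap B_r(0)$. For the Euclidean case, $\mu_{\C^n}(\{|\zeta| \in E_t\} \cap B_r(0)) = 2n \, \omega_{2n} \int_{E_t} s^{2n-1}\, ds$, where $\omega_{2n}$ is the volume of the unit ball in $\C^n = \R^{2n}$; more usefully, since $\mu_{\C^n}(B_s(0)) = \omega_{2n} s^{2n}$, we have $\mu_{\C^n}(\{|\zeta| \in E_t\} \cap B_r(0)) = \omega_{2n}\int_{E_t} d(s^{2n})$. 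On the variety side, $\int_{X \cap B_s(z)} dV_X = v(s,z) s^{2n}$ by \eqref{eq:De}, and since $E_t$ is measurable I would express $\mu_X(\{\zeta \in X : |\zeta-z| \in E_t\} \cap B_r(z))$ as the Lebesgue--Stieltjes integral $\int_{E_t} d\big(v(s,z) s^{2n}\big)$ (the function $s \mapsto v(s,z)s^{2n}$ is exactly the cumulative distribution of $dV_X$ restricted to shells about $z$). The comparison I actually need is then between $\int_{E_t} d(v(s,z)s^{2n})$ and $\omega_{2n}\int_{E_t} d(s^{2n})$.

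Here the monotonicity of $v(r,z)$ in $r$ (from \cite{De}, Consequence 3.5.8) is what makes it clean: because $v$ is increasing, $d\big(v(s,z)s^{2n}\big) = v(s,z)\, d(s^{2n}) + s^{2n}\, dv(s,z)$ with the second term a nonnegative measure, so $v(s,z)\, d(s^{2n}) \leq d\big(v(s,z)s^{2n}\big)$, while from below $d\big(v(s,z)s^{2n}\big)$ is comparable to $d(s^{2n})$ using that $v$ is bounded and the measures involved are supported on $[0,R]$. Combining with \eqref{eq:De2}, which gives $0 < c \leq v(s,z) \leq C$ for all $s \leq R$ and $z \in K$, I obtain
\begin{equation*}
    \frac{c}{\omega_{2n}}\, \mu_{\C^n}\big(\{|\zeta| \in E_t\} \cap B_r(0)\big) \leq \mu_X\big(\{\zeta : |\zeta - z| \in E_t\} \cap B_r(z)\big) \leq \frac{C}{\omega_{2n}}\, \mu_{\C^n}\big(\{|\zeta| \in E_t\} \cap B_r(0)\big)
\end{equation*}
uniformly in $t$. (Absorbing the harmless normalising constant $\omega_{2n}$ into $c,C$ — or noting that the statement as written presumably already incorporates it — is cosmetic.) Integrating this two-sided bound in $t$ over $(0,\infty)$ and invoking \eqref{eq:estimate0} in both directions yields exactly the claimed inequality.

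I expect the only real technical point to be making the Lebesgue--Stieltjes bookkeeping rigorous when $g$ is a general nonnegative measurable function and $E_t$ an arbitrary measurable subset of $[0,r)$ — in particular justifying that $\mu_X$ of a union of shells over $E_t$ equals $\int_{E_t} d(v(s,z)s^{2n})$, which is essentially the coarea/Fubini statement that $dV_X$ disintegrates over the distance function $s \mapsto |\zeta - z|$ with the stated cumulative mass. One can sidestep subtleties by first proving the estimate for $g$ a decreasing step function (where $E_t$ is an interval $[0,\rho_t)$ and the identity $\mu_X(X \cap B_{\rho_t}(z)) = v(\rho_t,z)\rho_t^{2n}$ is literally \eqref{eq:De}), and then passing to general $g$ by monotone approximation, using \eqref{eq:estimate0} and monotone convergence; the monotonicity of $v$ is exactly what guarantees the step-function estimates pass to the limit with the same constants $c,C$. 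Everything else is routine.
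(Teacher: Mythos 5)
Your proposal is essentially the paper's own proof: both reduce via \eqref{eq:estimate0} to comparing the distribution functions of $f$ on $X$ and of $\tilde f$ on $\C^n$, observe that the corresponding superlevel sets are unions of shells about $z$ (resp.\ about $0$) with matching radii, and compare their volumes using \eqref{eq:De}, the monotonicity of $v(r,z)$, and \eqref{eq:De2}. The one step you (rightly) flag as delicate and leave vague --- the upper bound $d\bigl(v(s,z)s^{2n}\bigr) \leq C\, d(s^{2n})$ as measures, i.e.\ $v(r_1,z)r_1^{2n}-v(r_2,z)r_2^{2n}\leq C\bigl(r_1^{2n}-r_2^{2n}\bigr)$ for a single shell --- is exactly the direction that does \emph{not} follow formally from monotonicity and two-sided boundedness of $v$ alone (the term $s^{2n}\,dv$ has the wrong sign for it), and the paper's proof asserts it just as tersely, so on this point your attempt is no less complete than the original.
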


\begin{proof}
    We claim that
    \begin{equation} \label{eq:distr-functions-comparable}
        c \lambda_{\tilde{f}}(s) \leq \lambda_f(s) \leq C \lambda_{\tilde{f}}(s),
    \end{equation}
    which together with \eqref{eq:estimate0} proves the lemma. 

    To prove the claim, we note first that since $f$ is radial around $z$,
    the level-set $\{ \zeta \in X \mid |f(\zeta)| \leq s \}$ is a union
    of intersections of $X$ with annuli $(B_{r_{1,i}}(z) \setminus B_{r_{2,i}}(z))$.
    The level-set $\{ \zeta \in \C^n \mid |\tilde{f}(\zeta)| \leq s \}$ is a union
    of annuli $(B_{r_{1,i}}(0) \setminus B_{r_{2,i}}(0))$ with the same radii.
    Since
    \begin{equation*}
        c(r_{1,i}^{2n}-r_{2,i}^{2n}) \leq \int_{(B_{r_{1,i}}(z) \setminus B_{r_{2,i}}(z))} dV_X 
        \leq C(r_{1,i}^{2n} - r_{2,i}^{2n})
    \end{equation*}
    by \eqref{eq:De2}, and the fact that $v(r,z)$ is increasing in $r$,
    we then get that \eqref{eq:distr-functions-comparable} holds.
\end{proof}

We then obtain the following important ingredient for our estimates.

\begin{lem}\label{lem:estimate2}
Let $X\subset \C^N$ be an analytic variety of pure dimension $n$, $K\subset X$ a compact subset and $R>0$. Fix also $\alpha\geq 0$.
Then there exists a constant $C_1>0$ such that the following holds:
\begin{eqnarray*}
I(r_1,r_2) := \int_{X \cap \left(B_{r_2}(z)\setminus \o{B_{r_1}(z)}\right)} 
\frac{dV_X(\zeta)}{\|\zeta-z\|^\alpha} \leq C_1 \left\{ 
\begin{array}{ll}
r_2^{2n-\alpha} & \ ,\ \alpha<2n,\\
1+|\log r_1| & \ ,\ \alpha=2n,\\
r_1^{2n-\alpha} & \ ,\ \alpha>2n,
\end{array}\right.
\end{eqnarray*}
for all $z\in K$ and $0<r_1 \leq r_2 \leq R$.
\end{lem}

A proof of Lemma~\ref{lem:estimate2} is obtained by combining the corresponding statement when
$X = \C^n$, \cite{LR}, Lemma~A.1, with Lemma~\ref{lem:comparable}.
Similarly, as it is an elementary calculation that the corresponding integral is bounded when $X = \C^n$,
we obtain the following.

\begin{lem}\label{lem:integral-log}
    Let $X$ and $K$ be as in Lemma~\ref{lem:comparable}.
    Then
    \begin{eqnarray*}
        I(z) := \int_{X\cap B_{1/2}(z)} \frac{dV_X(\zeta)}{\|\zeta-z\|^{2n} \log^2\|\zeta-z\|} &\lesssim& 1
    \end{eqnarray*}
    for all $z\in K$.
\end{lem}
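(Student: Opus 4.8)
The plan is to reduce the statement on $X$ to the corresponding statement on $\C^n$ via Lemma~\ref{lem:comparable}, exactly as Lemma~\ref{lem:estimate2} was reduced. The function $f(\zeta) = 1/(\|\zeta-z\|^{2n} \log^2\|\zeta-z\|)$ is radial around $z$, so it is of the form $g(|\zeta-z|)$ with $g(t) = 1/(t^{2n}\log^2 t)$, and Lemma~\ref{lem:comparable} applies directly (with $r = 1/2 \leq R$, after enlarging $R$ if necessary, or noting $K$ compact gives a uniform $R$). Thus it suffices to show that
\begin{equation*}
    \int_{B_{1/2}(0)} \frac{dV_{\C^n}(\zeta)}{|\zeta|^{2n} \log^2|\zeta|} < \infty,
\end{equation*}
and then the constant $C$ from \eqref{eq:De2} together with the compactness of $K$ gives a bound uniform in $z \in K$.

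For the $\C^n$ computation I would pass to polar coordinates: writing $\rho = |\zeta|$, the volume element contributes a factor $\rho^{2n-1} d\rho$ (times the measure of the unit sphere $S^{2n-1}$), so the integral becomes a constant times
\begin{equation*}
    \int_0^{1/2} \frac{\rho^{2n-1}}{\rho^{2n} \log^2 \rho} \, d\rho = \int_0^{1/2} \frac{d\rho}{\rho \log^2 \rho}.
\end{equation*}
With the substitution $u = \log \rho$ (so $du = d\rho/\rho$), this is $\int_{-\infty}^{\log(1/2)} u^{-2} \, du = [-1/u]_{-\infty}^{\log(1/2)} = -1/\log(1/2) = 1/\log 2$, which is finite. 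This is the elementary calculation alluded to in the paragraph before the statement, and it is genuinely routine.

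There is essentially no main obstacle here; the only mild point to be careful about is the behaviour of the integrand near $|\zeta - z| = 0$, where $\log^2\|\zeta-z\| \to \infty$ makes the integrand decay fast enough, and near $|\zeta-z| = 1/2$, where the integrand is bounded (since $\log(1/2) \neq 0$), so there is no divergence at the outer radius. One should also note implicitly that $\|\zeta-z\| < 1/2 < 1$ on the domain of integration, so $\log\|\zeta-z\| < 0$ and in particular is nonzero, so the integrand is well-defined and positive. Hence the lemma follows by combining this bounded $\C^n$-integral with Lemma~\ref{lem:comparable}, and the resulting bound depends only on $n$, $R$, and the constant $C$ in \eqref{eq:De2} associated to $K$, which is what the notation $\lesssim 1$ (uniform in $z \in K$) records.
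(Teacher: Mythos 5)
Your proposal is correct and follows exactly the route the paper indicates: reduce to $\C^n$ via Lemma~\ref{lem:comparable} and then verify the elementary radial integral $\int_0^{1/2} d\rho/(\rho\log^2\rho) = 1/\log 2 < \infty$ by the substitution $u=\log\rho$. The uniformity in $z\in K$ via the constant $C$ in \eqref{eq:De2} is also handled as intended.
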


For cut-off estimates, we also need the following, which we again by Lemma~\ref{lem:comparable} can
reduce to the case when $X = \C^n$, and this case follows by a straightforward calculation
(cf., \cite{LR}, Lemma~A.4 for a more general variant).

\begin{lem}\label{lem:integral-loglog}
    Let $X$ and $K$ be as in Lemma~\ref{lem:comparable}, and let
    for any integer $m\geq 0$ let $r_m:=e^{-e^m}$. Then  
\begin{eqnarray*}
I_m(z) := \int_{X\cap \big( B_{r_m}(z) \setminus \overline{B_{r_{m+1}}(z)}   \big)} \frac{dV_X(\zeta)}{\|\zeta-z\|^{2n} \big|\log\|\zeta-z\|\big|} &\lesssim& 1
\end{eqnarray*}
for all $z\in K$ uniformly, i.e., not depending on $m$.
\end{lem}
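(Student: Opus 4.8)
The plan is to reduce to the Euclidean case $X=\C^n$ via Lemma~\ref{lem:comparable} and then perform a direct computation. Since the integrand $1/(\|\zeta-z\|^{2n}|\log\|\zeta-z\|\,|)$ is a radial function of $\zeta-z$, and the annuli $B_{r_m}(z)\setminus\overline{B_{r_{m+1}}(z)}$ are exactly of the form covered by Lemma~\ref{lem:comparable} (take $g(t)=t^{-2n}|\log t|^{-1}$ for $t$ small and, say, extend $g$ by a bounded function for $t\geq 1/2$, which is irrelevant since all the $r_m\leq 1/2$), it suffices to bound
\begin{equation*}
    \tilde I_m := \int_{B_{r_m}(0)\setminus\overline{B_{r_{m+1}}(0)}} \frac{dV_{\C^n}(\zeta)}{\|\zeta\|^{2n}\,\big|\log\|\zeta\|\big|}
\end{equation*}
uniformly in $m$; the constants $c,C$ of \eqref{eq:De2} then give the bound for $I_m(z)$ uniformly in $z\in K$.

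For $\tilde I_m$, I would pass to polar coordinates: writing $\sigma_{2n-1}$ for the surface area of the unit sphere in $\C^n=\R^{2n}$,
\begin{equation*}
    \tilde I_m = \sigma_{2n-1}\int_{r_{m+1}}^{r_m} \frac{\rho^{2n-1}}{\rho^{2n}\,|\log\rho|}\,d\rho
    = \sigma_{2n-1}\int_{r_{m+1}}^{r_m}\frac{d\rho}{\rho\,|\log\rho|}.
\end{equation*}
On this range of $\rho$ we have $\log\rho<0$, so $|\log\rho|=-\log\rho$, and the substitution $u=-\log\rho$, $du=-d\rho/\rho$, turns the integral into $\int_{-\log r_m}^{-\log r_{m+1}} du/u$. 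By the choice $r_m=e^{-e^m}$ we have $-\log r_m=e^m$ and $-\log r_{m+1}=e^{m+1}$, so
\begin{equation*}
    \tilde I_m = \sigma_{2n-1}\int_{e^m}^{e^{m+1}}\frac{du}{u} = \sigma_{2n-1}\,\big(\log e^{m+1}-\log e^m\big) = \sigma_{2n-1},
\end{equation*}
which is a constant independent of $m$. Combining with Lemma~\ref{lem:comparable} gives $I_m(z)\leq C\sigma_{2n-1}$ for all $z\in K$ and all $m$, as claimed.

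There is essentially no obstacle here: the only mild point to be careful about is that Lemma~\ref{lem:comparable} is stated for a radial function $f$ defined on all of $X$ (resp. $\C^n$) and for integration over a \emph{ball} $B_r(z)$, whereas we need it on an annulus. This is handled either by applying the lemma to the two balls $B_{r_m}(z)$ and $B_{r_{m+1}}(z)$ and subtracting — using that the integrand is nonnegative and that, by the monotonicity of $v(r,z)$, the comparison constants are uniform — or, more simply, by observing that the proof of Lemma~\ref{lem:comparable} via the distribution-function comparison \eqref{eq:distr-functions-comparable} works verbatim when the integration domain is an annulus, since level sets of a radial function intersected with an annulus are again unions of annuli. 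Either way the Euclidean computation above is the whole content, and the double-exponential choice $r_m=e^{-e^m}$ is exactly what makes the logarithmically divergent integrand integrate to a constant on each dyadic-in-$\log\log$ shell.
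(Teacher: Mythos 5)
Your proof is correct and follows the same route as the paper: reduce to $X=\C^n$ via Lemma~\ref{lem:comparable} and then verify the Euclidean case by an elementary polar-coordinate computation (which the paper leaves to the reader, citing \cite{LR}, Lemma~6.4). Your explicit calculation, including the observation that the double-exponential radii $r_m=e^{-e^m}$ make each shell integral equal to a fixed constant, and your remark on extending the ball-version of Lemma~\ref{lem:comparable} to annuli, fill in exactly the details the paper omits.
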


\smallskip
\subsection{Basic integral estimates on analytic varieties}

We now consider integral estimates for integrands which are not radial,
but which are products of radial functions with different centers.
From Lemma \ref{lem:estimate2}, we can deduce our main basic estimate:

\begin{lem}\label{lem:estimate3}
Let $X\subset \C^N$ be an analytic variety of pure dimension $n$, $D\subset\subset X$ relatively compact and $0 \leq \alpha,\beta <2n$. 
Then there exists a constant $C_2>0$ such that the following holds:
\begin{eqnarray*}
\int_D \frac{dV_X(\zeta)}{\|\zeta-z\|^\alpha \|\zeta-w\|^\beta}
\leq C_2 \left\{
\begin{array}{ll}
1 & \ ,\ \alpha+\beta<2n,\\
\big| \log \|z - w\| \big| &\ ,\ \alpha+\beta=2n,\\
\|z - w\|^{2n-\alpha-\beta} &\ ,\ \alpha+\beta>2n,
\end{array}\right.
\end{eqnarray*}
for all $z, w \in X$ with $z\neq w$.
\end{lem}

Lemma~\ref{lem:estimate3} follows from Lemma~\ref{lem:estimate2} in exactly the same
way as Lemma~A.2 in \cite{LR} follows from Lemma~A.1 in \cite{LR}.

Also needed and a little more sophisticated is the following:

\begin{lem}\label{lem:estimate5}
Let $X\subset \C^N$ be an analytic variety of pure dimension $n$, $D\subset\subset X$ relatively compact, and $K \subset X$ compact,
$0 \leq \alpha \leq 2n$
and $0\leq \beta < 2n$. For any integer $m\geq 0$ let $r_m:= e^{-e^m}$.
Then there exists a constant $C_3>0$, not depending on $m$, such that the following holds:
\begin{eqnarray*}
\int_{D\cap \big(B_{r_{m}}(0) \setminus \overline{B_{r_{m+1}}(0)} \big) } \frac{dV_X(\zeta)}{\|\zeta\|^\alpha \big|\log \|\zeta\|\big| \|\zeta-z\|^\beta}
\leq C_3 \left\{
\begin{array}{ll}
1 & \ ,\ \alpha+\beta\leq 2n,\\
\|z\|^{2n-\alpha-\beta} &\ ,\ \alpha+\beta>2n,
\end{array}\right.
\end{eqnarray*}
for all $z \in K$ with $z \neq 0$.
\end{lem}

\begin{proof}
Let $K' := K \cup \overline{D} \cup \{ 0 \}$, and let $R$ be the diameter of $K'$.
Let $\delta:=\|z\|$. Since $z$ and $0$ belong to $K'$, we get that $\delta \leq R$.
We will apply Lemma \ref{lem:estimate2} several times with $K'$ and $R>0$ as chosen above.

We divide the domain of integration $Y:= D\cap \big(B_{r_{m}}(0) \setminus \overline{B_{r_{m+1}}(0)} \big)$
in three regions $D_1$, $D_2$, $D_3$. Let 
$$D_1:= Y \cap B_{\delta/2}(0)\ \ ,\ \ D_2:= Y \cap B_{\delta/2}(z).$$
Then $\|\zeta-z\| \geq \delta/2$ on $D_1$ and so
\begin{eqnarray*}
\int_{D_1} \frac{dV_X(\zeta)}{\|\zeta\|^\alpha \big|\log \|\zeta\|\big| \|\zeta-z\|^\beta}
&\leq& (\delta/2)^{-\beta} \int_{D_1} \frac{dV_X(\zeta)}{\|\zeta\|^\alpha \big|\log \|\zeta\|\big|}\\
&\lesssim&  (\delta/2)^{-\beta+2n-\alpha}.
\end{eqnarray*}
The last step follows by Lemma \ref{lem:estimate2} if $\alpha<2n$ 
(using $|\log|^{-1} \|\zeta\| \lesssim 1$, and letting $r_1\rightarrow 0$ in Lemma \ref{lem:estimate2}),
and by Lemma~\ref{lem:integral-loglog} if $\alpha=2n$.

\medskip
As $\|\zeta\|\geq \delta/2$ on $D_2$ we have similarly:
\begin{eqnarray*}
\int_{D_2} \frac{dV_X(\zeta)}{\|\zeta\|^\alpha \big| \log \|\zeta\|\big| \|\zeta-z\|^\beta} 
&\leq& (\delta/2)^{-\alpha} \int_{X\cap B_{\delta/2}(z)} \frac{dV_X(\zeta)}{\|\zeta-z\|^\beta} \\
&\leq& C_1 (\delta/2)^{-\alpha+2n-\beta},
\end{eqnarray*}
where we need only Lemma \ref{lem:estimate2} for the last step.

It remains to consider the integral over $Y \setminus (D_1\cup D_2)$.
Here, $\|\zeta-z\|\geq \delta/2$ and that yields:
\begin{eqnarray*}
\|\zeta\| \leq \|\zeta-z\| + \|z\| = \|\zeta- z\| + \delta \leq 3 \|\zeta-z\|.
\end{eqnarray*}
So, we can estimate:
\begin{eqnarray*}
\int_{Y \setminus (D_1\cup D_2)} \frac{dV_X(\zeta)}{\|\zeta\|^\alpha \big| \log\|\zeta\|\big| \|\zeta-z\|^\beta}
 &\leq& 3^{\beta} \int_{Y \cap (B_{R}(0)\setminus \o{B_{\delta/2}(0)})} \frac{dV_X(\zeta)}{\|\zeta\|^{\alpha+\beta}\big|\log\|\zeta\|\big|}\\
&\lesssim& 3^{\beta} C_1  \left\{ 
\begin{array}{ll}
R^{2n-\alpha-\beta} & \ ,\ \alpha+\beta \leq 2n,\\
(\delta/2)^{2n-\alpha-\beta} & \ ,\ \alpha+\beta \geq 2n.
\end{array}\right.
\end{eqnarray*}
For the last step, we use Lemma \ref{lem:estimate2} if $\alpha+\beta\neq 2n$,
and Lemma \ref{lem:integral-loglog} otherwise.

The assertion follows easily from this statement in combination with the estimates for the integration over $D_1$ and $D_2$.
\end{proof}

For $C^\alpha$-estimates, we will use the following variant of Lemma~\ref{lem:estimate2}.

\begin{lem}\label{lem:estimate8}
Let $X\subset \C^N$ be an analytic variety of pure dimension $n$, $K\subset X$ a
compact subset and $R>0$. Fix also $0 \leq \alpha < 2n$.
Then there exists a constant $C_4>0$ such that:
\begin{eqnarray*}
I_r(z) := \int_{X \cap B_{r}(z)} 
\frac{dV_X(\zeta)}{\|\zeta-w\|^\alpha} \leq C_4
r^{2n-\alpha}
\end{eqnarray*}
for all $z\in K$, $w \in X$ and $0\leq r \leq R$.
\end{lem}

\begin{proof}
    We first consider the case when $B_r(z) \cap B_r(w) = \emptyset$.
    Then, $\|\zeta-w\| > r$ on $B_r(z)$, so
    \begin{equation*}
        I_r(z) \leq \frac{1}{r^\alpha}\int_{X \cap B_{r}(z)} dV_X(\zeta)
        \leq C_1 r^{2n-\alpha}
    \end{equation*}
    by Lemma~\ref{lem:estimate2}.

    It remains to consider the case when $B_r(z) \cap B_r(w) \neq \emptyset$.
    Then, $B_r(z) \subseteq B_{3r}(w)$.
    Hence, again by Lemma~\ref{lem:estimate2},
    \begin{equation*}
        I_r(z) \leq \int_{X \cap B_{3r}(w)} \frac{dV_X(\zeta)}{\|\zeta-w\|^\alpha}
        \leq C_1 (3r)^{2n-\alpha}.
    \end{equation*}
\end{proof}

\section{$C^\alpha$- and $L^p$-forms on an analytic variety}\label{sec:lp-forms}

Our main results deal with $C^\alpha$- and $L^p$-forms on an analytic variety, so we precise
here its meaning, and remind of some basic results about such forms.
Let $X \subseteq \C^N$ be an analytic variety of pure dimension $n$, and let $D \subset\subset X$ be an open set.
Let $1\leq p\leq \infty$. Since $D^* = D \cap \Reg X$ is a submanifold of some open subset of $\C^N$, it inherits a Hermitian metric,
and we say that a $(0,q)$-form $\varphi$ on $D$ is in $L^p_{0,q}(D)$ if $\varphi|_{D^*}$ is in $L^p_{0,q}(D^*)$ with respect
to the induced volume form $dV_X$. Note that as remarked before, $\Sing X$ is a null-set with respect to $dV_X$,
so it does not matter if we consider $L^p$-forms on $D$ or $D^*$.

When we consider an $L^p$-differential form as input into an integral operator,
it will be convenient to represent it in a certain ``minimal'' manner.
If $\varphi$ is a $(0,q)$-form on $D$, then by \cite[Lemma~2.2.1]{RuThesis}, we can
write $\varphi$ uniquely in the form
\begin{equation}\label{eq:minrep}
    \varphi = \sum_{|I|=q} \varphi_I d\bar{z}_I,
\end{equation}
where
\begin{equation*}
    |\varphi|^2 (z)= 2^q \sum |\varphi_I|^2(z)
\end{equation*}
in each regular point $z\in D^*$.
The constants here stem from the fact that $|d\overline{z_j}|=\sqrt{2}$ in $\C^n$.
In particular, we then get that $\varphi \in L^p_{0,q}(D)$ if and only if $\varphi_I \in L^p(D)$ for all $I$.
If one has an arbitrary representation of $\varphi$ of the form \eqref{eq:minrep}, then
\begin{equation} \label{eq:non-minrepr}
    |\varphi|^2 (z) \leq 2^q \sum |\varphi_I|^2(z),
\end{equation}
and then, $\varphi \in L^p_{0,q}(D)$ if $\varphi_I \in L^p(D)$ for all $I$.

\medskip
For $0 \leq \alpha < 1$, we say that a $(0,q)$-form $\varphi$ is $C^\alpha$ at a point $z\in D$
if there is a representation \eqref{eq:minrep} such that all the coefficients $\varphi_I$ are
$C^\alpha$, i.e., H\"older continuous with exponent $\alpha$, at the point $z$. 
We denote by $C^\alpha_{0,q}(D)$ the vector space of $C^\alpha$-forms
on the domain $D$. $C^\alpha(D)$ is a Fr\'echet space with the usual metric, and we give $C^\alpha_{0,q}(D)$
the largest topology making the mapping
\begin{eqnarray*}
\bigoplus_{|I|=q} C^\alpha(D) \rightarrow C^\alpha_{0,q}(D)\ \ \ ,\ \ \ \big( \varphi_I \big)_I \mapsto \sum_{|I|=q} \varphi_I d\bar{z}_I
\end{eqnarray*}
continuous.
For $\alpha = 1$, we denote the Lipschitz continuous functions by $C^{0,1}(D)$,
in order to avoid conflict of notation with continuously differentiable functions.

Using the minimal representation \eqref{eq:minrep}, and the inequality \eqref{eq:non-minrepr}
for not necessarily minimal representations, the following lemma follows immediately.

\begin{lem} \label{lma:Lpformsfunctions}
If $\mathcal{K}$ is an integral operator mapping $(0,q)$-forms in $\zeta$ to $(0,q-1)$-forms in $z$,
defined by an integral kernel
\begin{equation*}
    K(\zeta,z) = \sum_{|L|=n,|I|=q-1,|J|={n-q}} K_{I,J,L}(\zeta,z) d\overline{z}_I \wedge {d\overline{\zeta}_J} \wedge d\zeta_L,
\end{equation*}
then $\mathcal{K}$ is a bounded linear map
$L_{0,q}^p(D') \to L_{0,q-1}^p(D)$ if
\begin{equation*}
    f(\zeta) \mapsto \int_{D'} K_{I,J,L}(\zeta,z) f(\zeta) dV_X(\zeta)
\end{equation*}
is a bounded linear map $L^p(D') \to L^p(D)$, and a continuous linear map
$L_{0,q}^\infty(D') \to C_{0,q-1}^\alpha(D)$ if
\begin{equation*}
    f(\zeta) \mapsto \int_{D'} K_{I,J,L}(\zeta,z) f(\zeta) dV_X(\zeta)
\end{equation*}
is a continuous linear map $L^\infty(D') \to C^\alpha(D)$.
\end{lem}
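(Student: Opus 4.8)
The plan is to reduce the statement entirely to the scalar hypothesis by expanding the wedge product $K(\zeta,z)\wedge\varphi(\zeta)$ against the minimal representation of the input. First I would write the $(0,q)$-form on $D'$ in its minimal form \eqref{eq:minrep}, say $\varphi=\sum_{|M|=q}\varphi_M\,d\overline\zeta_M$; by \eqref{eq:minrepr2} this gives $\varphi\in L^p_{0,q}(D')$ (resp.\ $L^\infty_{0,q}(D')$) if and only if each $\varphi_M$ lies in $L^p(D')$ (resp.\ $L^\infty(D')$), with comparable norms. Wedging the kernel against $\varphi$ produces, for each admissible $I,J,L,M$, a term $K_{I,J,L}(\zeta,z)\,\varphi_M(\zeta)\;d\overline z_I\wedge d\overline\zeta_J\wedge d\zeta_L\wedge d\overline\zeta_M$; the $\zeta$-part $d\overline\zeta_J\wedge d\zeta_L\wedge d\overline\zeta_M$ vanishes identically unless $J\cap M=\emptyset$, and otherwise is an $(n,n)$-form on $\C^N$ whose restriction to the $n$-dimensional variety $X^*$ equals $g_{I,J,L,M}(\zeta)\,dV_X(\zeta)$ for some measurable function $g_{I,J,L,M}$. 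Only finitely many such terms occur.

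The point is that each $g_{I,J,L,M}$ is \emph{bounded} on $X^*$, uniformly, with a bound depending only on $n$ and $q$: restricting a covector from $\C^N$ to the submanifold $X^*$ does not increase its norm, so $|d\overline\zeta_J\wedge d\zeta_L\wedge d\overline\zeta_M|$ is bounded on $X^*$ while $|dV_X|\equiv1$. Collecting terms, I then obtain a (not necessarily minimal) representation $\mathcal{K}\varphi=\sum_{|I|=q-1}(\mathcal{K}\varphi)_I\,d\overline z_I$ with
\begin{equation*}
(\mathcal{K}\varphi)_I(z)=\sum\nolimits_{J,L,M}\pm\int_{D'}K_{I,J,L}(\zeta,z)\,\big(g_{I,J,L,M}(\zeta)\,\varphi_M(\zeta)\big)\,dV_X(\zeta),
\end{equation*}
a finite sum. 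Since $\|g_{I,J,L,M}\varphi_M\|_{L^p(D')}\lesssim\|\varphi_M\|_{L^p(D')}$ (and likewise in $L^\infty$), applying the hypothesis to the functions $f=g_{I,J,L,M}\varphi_M$ shows that each $(\mathcal{K}\varphi)_I$ is a well-defined element of $L^p(D)$ (resp.\ of $C^\alpha(D)$) which depends boundedly (resp.\ continuously) on $\varphi$ through the coefficients $\varphi_M$.

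It remains to pass back from coefficients to forms. Although the representation of $\mathcal{K}\varphi$ just obtained need not be minimal, the estimate \eqref{eq:non-minrepr} still gives $|\mathcal{K}\varphi|^2\le\sqrt2^{\,q-1}\sum_{|I|=q-1}|(\mathcal{K}\varphi)_I|^2$ pointwise on $D^*$, so $\mathcal{K}\varphi\in L^p_{0,q-1}(D)$ with $\|\mathcal{K}\varphi\|_{L^p_{0,q-1}(D)}\lesssim\|\varphi\|_{L^p_{0,q}(D')}$ by \eqref{eq:minrepr2}, which is the asserted boundedness. In the $L^\infty$ case, $\varphi\mapsto(\mathcal{K}\varphi)_I$ is continuous $L^\infty_{0,q}(D')\to C^\alpha(D)$ for each $I$, hence $\varphi\mapsto\mathcal{K}\varphi$ is continuous into $C^\alpha_{0,q-1}(D)$ by the definition of the topology on the latter as the minimal one making $(\psi_I)_I\mapsto\sum_I\psi_I\,d\overline z_I$ continuous. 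The only step needing any care is the wedge-product bookkeeping in the first two paragraphs, namely identifying the surviving terms, their signs, and the bounded factors $g_{I,J,L,M}$; but since only finitely many terms appear and all constants depend only on $n$ and $q$, there is no genuine obstacle, which is why the lemma ``follows immediately''.
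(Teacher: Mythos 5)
Your proposal is correct and is exactly the argument the paper intends: the paper offers no written proof beyond the remark that the lemma ``follows immediately'' from the minimal representation \eqref{eq:minrep} and the inequality \eqref{eq:non-minrepr}, and your expansion of the wedge product, the observation that the restricted $(n,n)$-part in $\zeta$ is a bounded multiple of $dV_X$, and the passage back via \eqref{eq:non-minrepr} and the quotient topology on $C^\alpha_{0,q-1}(D)$ is precisely that omitted bookkeeping. No gaps.
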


\medskip
\section{Estimates for integral operators with isotropic isolated poles
on varieties with arbitrary singularities}
\label{sec:Iestimates}

Let $X\subset \C^N$ be an analytic variety of pure dimension $n$.
We will consider properties of the integral kernel
\begin{eqnarray}\label{eq:kernel1}
k_\gamma(\zeta,z):= \frac{\|z\|^\gamma}{\|\zeta\|^\gamma \|\zeta-z\|^{2n-1}}
\end{eqnarray}
on $X$ for $0\leq \gamma < 2n$.

\medskip
\subsection{$L^p$-mapping properties}

Our basic estimate, Lemma \ref{lem:estimate3}, allows to study $L^p$-mapping properties of integral operators
given by the kernels $k_\gamma(\zeta,z)$ defined in \eqref{eq:kernel1} by the use of generalized Young inequalities.

\begin{thm}\label{thm:lp-estimate}
Let $D\subset\subset X$ be a bounded domain in $X$. Let $0\leq \gamma<2n$.
Then the integral operator
\begin{eqnarray*}
f \mapsto {\bf T}(f)(z):=\int_D f(\zeta) k_\gamma(\zeta,z) dV_X(\zeta)
\end{eqnarray*}
defines  a bounded linear operator ${\bf T}: L^p(D) \rightarrow L^p(D)$ for all $\frac{2n}{2n-\gamma} < p \leq \infty$.
\end{thm}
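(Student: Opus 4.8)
The plan is to reduce the $L^p$-boundedness of $\mathbf{T}$ to a generalized Young (Schur-type) inequality, using the pointwise bound on $k_\gamma$ together with the basic integral estimate Lemma~\ref{lem:estimate3}. Recall that the Schur test says: if an integral operator has kernel $|K(\zeta,z)|$ and one can find a positive weight $\Phi$ such that $\int_D |K(\zeta,z)| \Phi(\zeta)^{p'/p}\, dV_X(\zeta) \leq C\, \Phi(z)^{p'/p}$ (where $p' = p/(p-1)$) for all $z$, and symmetrically with the roles of $\zeta$ and $z$ exchanged, then $\mathbf{T}$ is bounded on $L^p(D)$. Here the natural choice is a \emph{power weight} $\Phi(\zeta) = \|\zeta\|^{-s}$ for a suitable exponent $s$ to be pinned down, since the kernel $k_\gamma$ is homogeneous in $\|\zeta\|$ and $\|z\|$ of degrees $-\gamma$ and $+\gamma$ respectively, and the remaining factor $\|\zeta-z\|^{-(2n-1)}$ is precisely of the type controlled by Lemma~\ref{lem:estimate3}.

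First I would treat the endpoint $p=\infty$ separately and by hand: for $p=\infty$ one only needs $\sup_z \int_D k_\gamma(\zeta,z)\, dV_X(\zeta) < \infty$. Since $\|z\|^\gamma$ is bounded on $\overline{D}$, this reduces to bounding $\int_D \|\zeta\|^{-\gamma}\|\zeta-z\|^{-(2n-1)}\, dV_X(\zeta)$, which by Lemma~\ref{lem:estimate3} (with $\alpha = \gamma < 2n$, $\beta = 2n-1 < 2n$, and $\alpha + \beta = 2n + \gamma - 1$) is bounded by a constant times $\|z\|^{2n - \alpha - \beta} = \|z\|^{1-\gamma}$ when $\gamma > 1$, by $|\log\|z\||$ when $\gamma = 1$, and by a constant when $\gamma < 1$; in all three cases this is integrable/bounded near $0$ after multiplying by $\|z\|^\gamma$, giving a uniform bound. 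Actually one must be slightly careful: when $\gamma \geq 1$ one gets $\|z\|^\gamma \cdot \|z\|^{1-\gamma} = \|z\|$ or $\|z\|^\gamma|\log\|z\||$, both bounded; when $\gamma < 1$ one gets a bounded function times the bounded $\|z\|^\gamma$. So $p = \infty$ is fine.

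For $2n/(2n-\gamma) < p < \infty$, I would apply the weighted Schur test with $\Phi(\zeta) = \|\zeta\|^{-s}$. One computes
\[
\int_D k_\gamma(\zeta,z)\, \|\zeta\|^{-s p'/p}\, dV_X(\zeta)
\lesssim \|z\|^\gamma \int_D \frac{dV_X(\zeta)}{\|\zeta\|^{\gamma + s p'/p}\, \|\zeta - z\|^{2n-1}},
\]
and by Lemma~\ref{lem:estimate3} the integral is $\lesssim \|z\|^{2n - (2n-1) - \gamma - s p'/p} = \|z\|^{1 - \gamma - s p'/p}$ (valid provided the exponent $\gamma + sp'/p$ stays in $[0,2n)$ and the sum exceeds $2n$, which is where the constraint $p > 2n/(2n-\gamma)$ enters). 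So the first Schur condition demands $\gamma + (1 - \gamma - sp'/p) = -s$, i.e. $s$ should satisfy $1 = -s + sp'/p$... — the bookkeeping here is the routine part, but the upshot is that one needs to choose $s$ so that both the $\zeta$-integral and the symmetric $z$-integral (where one uses that $k_\gamma(\zeta,z)$ with the $\|z\|^\gamma$ factor pulled to the $\zeta$-side and weight $\|z\|^{-s}$ behaves the same way, again by Lemma~\ref{lem:estimate3}) close up. The admissible range of $s$ is a nonempty open interval precisely when $p > 2n/(2n-\gamma)$; this is the arithmetic heart of the matter and I expect it to work out exactly as the analogous computation in \cite{LR}.

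\textbf{The main obstacle} is the careful verification that the exponents appearing in the two Schur integrals both land in the regime $\alpha + \beta > 2n$ (so that Lemma~\ref{lem:estimate3} gives the decaying power $\|z\|^{2n-\alpha-\beta}$ rather than a logarithm or a constant) and simultaneously satisfy $\alpha, \beta < 2n$, and that the resulting power of $\|z\|$ is exactly $-s$ on both sides — i.e. that the homogeneities balance. Tracking this forces the choice of $s$, and checking that such an $s$ exists is equivalent to the stated lower bound $p > 2n/(2n-\gamma)$; the borderline cases ($\alpha+\beta = 2n$ producing a logarithm) must be excluded by keeping $s$ strictly inside the interval, which is possible precisely because the inequality on $p$ is strict. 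Once the weight is chosen, boundedness on $L^p$ follows immediately from the Schur test, and interpolation with the $p=\infty$ case (or a direct check) covers the whole range.
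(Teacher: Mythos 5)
Your argument is correct and rests on the same two pillars as the paper's proof --- a generalized Young/H\"older-type kernel estimate combined with Lemma~\ref{lem:estimate3} --- but the implementation differs. The paper does not use a weighted Schur test: it applies H\"older directly to the splitting
\begin{equation*}
k_\gamma(\zeta,z)=\Bigl(\frac{1}{\|\zeta-z\|^{2n-1}}\Bigr)^{1/p}\Bigl(\frac{\|z\|^{p^*\gamma}}{\|\zeta\|^{p^*\gamma}\|\zeta-z\|^{2n-1}}\Bigr)^{1/p^*},
\end{equation*}
i.e.\ it loads the entire weight $\|z\|^\gamma/\|\zeta\|^\gamma$ onto the $p^*$-factor. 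The only two facts then needed are $\sup_z\int_D \|z\|^{p^*\gamma}\|\zeta\|^{-p^*\gamma}\|\zeta-z\|^{-(2n-1)}\,dV_X(\zeta)\lesssim 1$ (which holds by Lemma~\ref{lem:estimate3} exactly because $p^*\gamma<2n$, equivalent to $p>2n/(2n-\gamma)$) and $\sup_\zeta\int_D\|\zeta-z\|^{-(2n-1)}\,dV_X(z)\lesssim 1$, after which Fubini finishes the proof; there is no auxiliary exponent to hunt for. Your power-weight Schur test $\Phi(\zeta)=\|\zeta\|^{-s}$ also works, and your arithmetic for the admissible interval $\gamma/p\le s<(2n-\gamma)/p^*$ being nonempty does reduce to $p>2n/(2n-\gamma)$; the price is the extra bookkeeping you flag as the ``main obstacle''. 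One small correction to that bookkeeping: exact homogeneity balance of the two sides of the Schur condition is impossible (your equation $1=-s+sp'/p$ has no consistent solution and need not have one); since $D$ is bounded you only need the exponent on the left to be \emph{at least} the exponent on the right, so the stray factor $\|z\|^{+1}$ coming from $2n-\alpha-\beta=1-\gamma-sp^*$ is harmless, just as in your $p=\infty$ discussion. Your separate treatment of $p=\infty$ matches the paper's.
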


\begin{proof}
Let us first consider the case $p<\infty$. Choose 
    $$p^*:=p/(p-1).$$
    So, $1/p+1/p^*=1$. Moreover, we get:
    \begin{eqnarray*}
    \gamma p^* < 2n \Leftrightarrow 1/p^* > \gamma/2n \Leftrightarrow 1-\gamma/2n > 1/p \Leftrightarrow p> \frac{2n}{2n-\gamma},
    \end{eqnarray*}
   so that actually $\gamma p^*<2n$ by the assumption on $p$.
   
       We want to show that the $L^p$-norm of ${\bf T}f$ is finite, and
    we begin by estimating and decomposing, and using the H\"older inequality (with $1/p+1/p^*=1$) in the following way:
   
    \begin{align*}
        I := &\int_D  \left| \int_D f(\zeta)k_\gamma(\zeta,z) dV_X(\zeta) \right|^p dV_X(z) \\ \leq
        &\int_D\left( \int_D \left(\frac{|f(\zeta)|^p}{\|\zeta-z\|^{2n-1}} \right)^{1/p}
        \left( \frac{\|z\|^{p^*\gamma}}{\|\zeta\|^{p^*\gamma}\|\zeta-z\|^{2n-1}}\right)^{1/p^*}  dV_X(\zeta) \right)^{p} dV_X(z) \\ \leq
        &\int_D \int_D \frac{|f(\zeta)|^p}{\|\zeta-z\|^{2n-1}}  dV_X(\zeta) 
        \left(\int_D \frac{\|z\|^{p^*\gamma}}{\|\zeta\|^{p^*\gamma}\|\zeta-z\|^{2n-1}} dV_X(\zeta)\right)^{p/p^*}  dV_X(z).
    \end{align*}
    Inserting
     \begin{eqnarray*}   
    \int_D \frac{\|z\|^{p^*\gamma}}{\|\zeta\|^{p^*\gamma}\|\zeta-z\|^{2n-1}} dV_X(\zeta) &\lesssim& 1
    \end{eqnarray*}
    which we get by use of Lemma  \ref{lem:estimate3} (recall that $p^*\gamma<2n$),
    and applying the Fubini theorem gives:
    \begin{eqnarray*}
    I &\lesssim& \int_D |f(\zeta)|^p \int_D \frac{dV_X(z)}{\|\zeta-z\|^{2n-1}} dV_X(\zeta)\\
    &\lesssim& \int_D |f(\zeta)|^p dV_X(\zeta) = \|f\|_{L^p(D)}^p,
    \end{eqnarray*}
    where we have applied Lemma \ref{lem:estimate3} once more (for the integral in $z$).
    
    \bigskip    
    It remains to consider the case $p=\infty$ which is even simpler:
    \begin{eqnarray*}
    \left| \int_D f(\zeta) k_\gamma(\zeta,z) dV_X(\zeta) \right| &\leq& \|f\|_\infty \int k_\gamma(\zeta,z) dV_X(\zeta) \lesssim \|f\|_\infty
         \end{eqnarray*}
    by use of Lemma \ref{lem:estimate3} (with the assumption that $\gamma<2n$).
 \end{proof}

\begin{lem}\label{lem:cpt-cutoff-estimate}
Let $D\subset\subset D' \subset\subset X$ be bounded domains in $X$. Let $0\leq \gamma<2n$,
and let for $j > 0$,
\begin{equation*}
    k_{j,\gamma}(\zeta,z) := \left\{ \begin{array}{cc} 0 & \text{ if } k_\gamma(\zeta,z) > j \\ k_\gamma(\zeta,z) & \text{ otherwise } \end{array} \right..
\end{equation*}
Let
\begin{eqnarray*}
    f \mapsto {\bf T}_j(f)(z):=\int_{D'} f(\zeta) k_{j,\gamma}(\zeta,z) dV_X(\zeta)
\end{eqnarray*}
and
\begin{eqnarray*}
    f \mapsto {\bf T}(f)(z):=\int_{D'} f(\zeta) k_\gamma(\zeta,z) dV_X(\zeta).
\end{eqnarray*}
Then $\|{\bf T}_j - {\bf T}\| \to 0$ as bounded linear operators $L^p(D') \rightarrow L^p(D)$ for all $\frac{2n}{2n-\gamma} < p < \infty$.
\end{lem}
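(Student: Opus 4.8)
The plan is to show that $\mathbf{T}_j \to \mathbf{T}$ in operator norm by estimating the operator norm of the difference $\mathbf{T} - \mathbf{T}_j$, which is the integral operator with kernel $k_\gamma - k_{j,\gamma}$. Note that $k_\gamma(\zeta,z) - k_{j,\gamma}(\zeta,z)$ equals $k_\gamma(\zeta,z)$ on the set where $k_\gamma(\zeta,z) > j$ and equals $0$ otherwise; in particular it is supported on the ``near-diagonal'' set $E_j := \{(\zeta,z) : k_\gamma(\zeta,z) > j\}$, which shrinks as $j \to \infty$. I would like to rerun the proof of Theorem~\ref{thm:lp-estimate} verbatim but with the extra factor $\mathbf{1}_{E_j}$ inserted, and extract a small constant from the presence of this indicator.

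Concretely, first I would fix $p$ with $2n/(2n-\gamma) < p < \infty$ and set $p^* = p/(p-1)$, so that $\gamma p^* < 2n$ as in the proof of Theorem~\ref{thm:lp-estimate}. Running the same H\"older decomposition on $\mathbf{T} - \mathbf{T}_j$, the key quantity to control is, for fixed $z$,
\begin{equation*}
    A_j(z) := \int_{D'} \mathbf{1}_{E_j}(\zeta,z) \frac{\|z\|^{p^*\gamma}}{\|\zeta\|^{p^*\gamma} \|\zeta-z\|^{2n-1}}\, dV_X(\zeta),
\end{equation*}
in place of the integral that was estimated by $\lesssim 1$ before; likewise, after Fubini, one needs $\int_{D'} \mathbf{1}_{E_j}(\zeta,z) \|\zeta-z\|^{-(2n-1)} dV_X(z) \lesssim \varepsilon_j$ with $\varepsilon_j \to 0$. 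The point is that $k_\gamma(\zeta,z) > j$ together with boundedness of $\|z\|^\gamma/\|\zeta\|^\gamma$ on $D'$ (away from where it matters) forces $\|\zeta - z\|^{2n-1} < C/j$, i.e. $\|\zeta-z\| < \delta_j$ with $\delta_j \to 0$. Hence $E_j$ is contained in $\{\|\zeta - z\| < \delta_j\}$ up to the behavior near $\zeta = 0$ or $z = 0$; one has to treat separately the region where $\|\zeta\|$ or $\|z\|$ is small, but there the kernel $k_\gamma$ is itself controlled by Lemma~\ref{lem:estimate3} and a dominated-convergence argument on the $z$-integral handles it. Then Lemma~\ref{lem:estimate2} gives $\int_{B_{\delta_j}(z) \cap X} \|\zeta-z\|^{-(2n-1)} dV_X \lesssim \delta_j$, so both the inner integral $A_j(z)$ is bounded uniformly in $j$ and $z$ (by $\lesssim 1$, as before, via Lemma~\ref{lem:estimate3} — we do not need smallness here) and the $z$-integral after Fubini is $\lesssim \delta_j \to 0$. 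Chaining these exactly as in the proof of Theorem~\ref{thm:lp-estimate} yields $\|\mathbf{T} - \mathbf{T}_j\|_{L^p(D') \to L^p(D)}^p \lesssim \delta_j \to 0$.

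The main obstacle I anticipate is the bookkeeping around the points $\zeta = 0$ and $z = 0$: the inequality $k_\gamma(\zeta,z) > j$ does \emph{not} by itself force $\|\zeta - z\|$ small, because the factor $\|z\|^\gamma/\|\zeta\|^\gamma$ can be large when $\zeta$ is near $0$. So one cannot simply say $E_j \subseteq \{\|\zeta-z\| < \delta_j\}$. The clean fix is to split $D'$ into $\{\|\zeta\| \geq \rho\}$ (where the argument above runs with $\delta_j \to 0$ depending on $\rho$) and $\{\|\zeta\| < \rho\}$ (where one instead argues that $k_\gamma \mathbf{1}_{\{\|\zeta\|<\rho\}}$ has small operator norm uniformly in $j$, again by Lemma~\ref{lem:estimate3}, since shrinking the domain of the $\zeta$-integration shrinks the bound), then choose $\rho$ small and $j$ large. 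Alternatively — and this is probably cleanest to write — one observes that $\mathbf{T}_j \to \mathbf{T}$ strongly (i.e. pointwise on $L^p$, by dominated convergence for each fixed $f$, since $k_{j,\gamma}(\zeta,z)f(\zeta) \to k_\gamma(\zeta,z)f(\zeta)$ pointwise and is dominated by the integrable $k_\gamma(\zeta,z)|f(\zeta)|$) and that the family $\{\mathbf{T} - \mathbf{T}_j\}$ is ``uniformly small at the level of the generalized Young kernel estimate'', which upgrades strong convergence to norm convergence; but the direct estimate above is more self-contained and I would present that.
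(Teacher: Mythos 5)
Your overall framework is the right one and matches the paper's: run the H\"older/generalized-Young estimate from the proof of Theorem~\ref{thm:lp-estimate} on the difference kernel $k_\gamma-k_{j,\gamma}=k_\gamma\mathbf{1}_{E_j}$ and extract smallness from the shrinking of $E_j$. Where you differ is in \emph{which} H\"older factor carries the smallness. You keep the Young factor $A_j(z)\lesssim 1$ and want decay from $\int_D\mathbf{1}_{E_j}(\zeta,z)\|\zeta-z\|^{-(2n-1)}\,dV_X(z)$ after Fubini; this is exactly what forces you into the $\{\|\zeta\|<\rho\}$ versus $\{\|\zeta\|\ge\rho\}$ splitting, and the small-$\rho$ piece then needs a quantitative small-ball version of the basic estimate. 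Be aware that the justification ``shrinking the domain of the $\zeta$-integration shrinks the bound, by Lemma~\ref{lem:estimate3}'' is not a proof: the constant in Lemma~\ref{lem:estimate3} does not depend on the domain, so nothing shrinks automatically. The claim is true, but you must redo the estimate for $\int_{D'\cap B_\rho(0)}\|z\|^{p^*\gamma}\|\zeta\|^{-p^*\gamma}\|\zeta-z\|^{-(2n-1)}\,dV_X(\zeta)$ directly via Lemma~\ref{lem:estimate2}, separating the cases $\|z\|\ge 2\rho$ and $\|z\|<2\rho$, to get a bound of the form $\rho^{\varepsilon}$. The paper avoids the whole issue by putting the indicator into the Young factor and observing that, for fixed $z$, the exceptional set $D_j=\{\zeta\in D': k_\gamma(\zeta,z)>j\}$ is contained in $B_{\|z\|/j^{1/(2\gamma)}}(0)\cup B_{j^{-1/(4n-2)}}(z)$, a union of two balls of radii tending to $0$ --- so the region near $\zeta=0$ that worries you is automatically of vanishing volume --- whence $I_j(z)\to 0$ pointwise by dominated convergence, and a second dominated-convergence argument in $z$ (for fixed $f$) finishes.

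One more point in your favor: your direct route, once the small-ball estimate is supplied, yields convergence in operator norm with an explicit rate, whereas the paper's written argument (pointwise decay of $I_j$ plus dominated convergence for each fixed $f$) literally establishes strong convergence; to get the norm convergence that the compactness argument in the proof of Theorem~\ref{thm:main1} actually uses, one should note that $I_j(z)\to 0$ \emph{uniformly} in $z$, which follows from the two-ball containment together with essentially the same small-ball estimates your approach requires. So your instinct that something uniform in $f$ is needed is sound; just note that your fallback (``strong convergence plus an upgrade'') is not an argument as stated, and your main route needs the restricted-domain estimate written out.
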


\begin{proof}
    The proof follows in a way similar to the proof of Theorem~\ref{thm:lp-estimate}. Take $f \in L^p(D')$.
    Following that proof, one gets that
    \begin{equation} \label{eq:TminusTj}
        \|({\bf T}_j - {\bf T})f\|^p_{L^p(D)} \leq
        \int_D \int_{D'} \frac{\|f(\zeta)\|^p dV_X(\zeta)}{\|\zeta-z\|^{2n-1}} I_j(z) dV_X(z),
    \end{equation}
    where
    \begin{equation*}
        I_j(z) = \left( \int_{ D_j } \frac{\|z\|^\gamma dV_X(\zeta)}{\|\zeta\|^{\gamma} \|\zeta-z\|^{2n-1} } \right)^{p/p^*}
    \end{equation*}
    and $D_j := \{ \zeta \in D' \mid k_\gamma(\zeta,z) > j \}$.
    We note that
    \begin{align}
    \notag D_j \subseteq &\{ \zeta \in D' \mid \|z\|^\gamma/\|\zeta\|^\gamma > \sqrt{j} \} \cup 
        \{ \zeta \in D' \mid 1/\|\zeta-z\|^{2n-1} > \sqrt{j} \} \subseteq \\
        \label{eq:Djballs}    \subseteq & D' \cap (B_{\|z\|/j^{1/(2\gamma)}}(0) \cup B_{1/j^{1/(4n-2)}}(z))
    \end{align}
    when $\gamma > 0$. If $\gamma = 0$, we just interpret the first ball to be empty. We now claim that
    there exist $C_j$ such that $I_j(z) \leq C_j \to 0$.
    To see this, we note that the integrand in $I_j$ is bounded by $M_1/\|\zeta\|^{2n-1} + M_2/\|\zeta-z\|^{2n-1}$.
    By Lemma~\ref{lem:estimate8}, the integral of both these terms on the balls in \eqref{eq:Djballs} tends
    to $0$ since the radii tend to $0$, proving the claim.
    To conclude, from \eqref{eq:TminusTj}, similarly to the proof of Theorem~\ref{thm:lp-estimate},
    we get that
    \begin{equation*}
        \|({\bf T}_j-{\bf T})f\|_{L^p(D)} \leq C C_j \|f\|_{L^p(D')},
    \end{equation*}
    where $C$ is independent of $j$ and $f$.
\end{proof}

\medskip
\subsection{Continuity estimates}

\begin{thm}\label{thm:C0-estimate}
Let $D\subset\subset X$ be a bounded domain in $X$. Let $\gamma\in \Z$, $0\leq \gamma<2n$,
and let
\begin{equation*}
    \tilde{k}_\gamma(\zeta,z) := \frac{\|z\|^\gamma}{\|\zeta\|^\gamma}\frac{\overline{\zeta_i-z_i}}{\|\zeta-z\|^{2n}},
\end{equation*}
for some $i \in \{1,\dots,n\}$.
Then the integral operator
\begin{eqnarray*}
    f \mapsto {\bf T}(f)(z):=\int_D f(\zeta) \tilde{k}_\gamma(\zeta,z) dV_X(\zeta)
\end{eqnarray*}
defines a compact continuous linear operator ${\bf T}: L^\infty(D) \rightarrow C^\alpha(D)$,
where $0 \leq \alpha < 1$.
\end{thm}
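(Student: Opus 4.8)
The plan is to show that $\mathbf{T}$ maps $L^\infty(D)$ into $C^\alpha(D)$ for every $0\le\alpha<1$ with bounds on the $C^\alpha$-norm controlled by $\|f\|_\infty$, and then to deduce compactness from a standard Arzel\`a--Ascoli argument together with the interpolation $C^{\alpha'}\subset\subset C^\alpha$ for $\alpha<\alpha'<1$. The kernel $\tilde k_\gamma(\zeta,z)$ has, apart from the smooth factor $\overline{\zeta_i-z_i}$ of size $\|\zeta-z\|$, the singularity $\|z\|^\gamma/(\|\zeta\|^\gamma\|\zeta-z\|^{2n})$, so that $|\tilde k_\gamma(\zeta,z)|\lesssim \|z\|^\gamma\|\zeta\|^{-\gamma}\|\zeta-z\|^{-(2n-1)}$. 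First I would record uniform boundedness: for $f\in L^\infty(D)$,
\begin{equation*}
    |\mathbf{T}f(z)|\le \|f\|_\infty \int_D \frac{\|z\|^\gamma\,dV_X(\zeta)}{\|\zeta\|^\gamma\|\zeta-z\|^{2n-1}}\lesssim \|f\|_\infty,
\end{equation*}
by Lemma~\ref{lem:estimate3} (with exponents $\gamma<2n$ and $2n-1<2n$, whose sum $\gamma+2n-1$ can exceed $2n$, giving the factor $\|z-z'\|$-type bound; here with one centre it is simply bounded since we integrate over a fixed $\zeta$-domain), which already gives $L^\infty(D)\to L^\infty(D)$ continuity.

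The core is the H\"older estimate. Fix $z,z'\in D$ and set $\delta:=\|z-z'\|$; I want $|\mathbf{T}f(z)-\mathbf{T}f(z')|\lesssim \|f\|_\infty\,\delta^\alpha$. Split $D$ into the near part $D\cap B_{2\delta}(z)$ and the far part $D\setminus B_{2\delta}(z)$. On the near part one estimates the two terms separately: $\int_{D\cap B_{2\delta}(z)}|\tilde k_\gamma(\zeta,z)|\,dV_X\lesssim \|z\|^\gamma\int_{B_{2\delta}(z)}\|\zeta\|^{-\gamma}\|\zeta-z\|^{-(2n-1)}\,dV_X$, and since on $B_{2\delta}(z)$ either $\|\zeta\|\gtrsim\|z\|$ (so the $\|z\|^\gamma\|\zeta\|^{-\gamma}$ factor is bounded and Lemma~\ref{lem:estimate8} gives $\lesssim\delta$) or $\|z\|\lesssim\delta$ (in which case crudely $\|z\|^\gamma\|\zeta\|^{-\gamma}\le$ bounded after noting $\|\zeta\|$ can only be small when $\|z\|$ is, absorbing powers of $\delta$); either way the near integral is $\lesssim\delta\le\delta^\alpha$, and similarly for the $z'$-term using $B_{2\delta}(z)\subseteq B_{3\delta}(z')$. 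On the far part $\|\zeta-z\|\ge 2\delta$, so $\|\zeta-z'\|\ge\|\zeta-z\|-\delta\ge\tfrac12\|\zeta-z\|$, and by the mean value theorem applied to $\zeta\mapsto \overline{\zeta_i-z_i}\|\zeta-z\|^{-2n}$ (a function of $z$, differentiated in $z$) one has
\begin{equation*}
    |\tilde k_\gamma(\zeta,z)-\tilde k_\gamma(\zeta,z')|\lesssim \frac{\|z\|^\gamma}{\|\zeta\|^\gamma}\frac{\delta}{\|\zeta-z\|^{2n}}+\Big|\frac{\|z\|^\gamma-\|z'\|^\gamma}{\|\zeta\|^\gamma}\Big|\frac{1}{\|\zeta-z\|^{2n-1}},
\end{equation*}
where I also use that $\|z\|^\gamma/\|\zeta\|^\gamma$ is comparable for $z,z'$ up to the term displayed, and that $|\|z\|^\gamma-\|z'\|^\gamma|\lesssim \delta^\alpha$ (for $\gamma\ge 1$ Lipschitz in $z$, hence $\lesssim\delta\le\delta^\alpha$; and $\|z\|^\gamma/\|\zeta\|^\gamma$ needs the comparison $\|\zeta\|\gtrsim$ the larger of $\|z\|,\delta$ on the far region to keep it bounded). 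Integrating the first term over $D\setminus B_{2\delta}(z)$ gives $\lesssim\|z\|^\gamma\,\delta\int_{D\setminus B_{2\delta}(z)}\|\zeta\|^{-\gamma}\|\zeta-z\|^{-2n}\,dV_X$; applying Lemma~\ref{lem:estimate3} on the annulus $B_R(z)\setminus B_{2\delta}(z)$ (the relevant exponents being $\gamma+2n>2n$) produces a factor $\delta^{-1}$ up to the $\|z\|^\gamma$ absorbed, hence the whole contribution is $\lesssim\delta$; more carefully, to get $\delta^\alpha$ with $\alpha<1$ one writes $\delta=\delta^\alpha\delta^{1-\alpha}$ and bounds $\delta^{1-\alpha}$ times a radial integral of $\|\zeta-z\|^{-2n+(1-\alpha)}$-type which converges near $z$ and is bounded by Lemma~\ref{lem:estimate2}. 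The second term integrates to $\lesssim\delta^\alpha$ directly by Lemma~\ref{lem:estimate3}. Combining the near and far estimates yields $[\mathbf{T}f]_{C^\alpha(D)}\lesssim\|f\|_\infty$, proving continuity $L^\infty(D)\to C^\alpha(D)$.

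For compactness I would argue as follows: by what is already proved, $\mathbf{T}$ is a bounded operator $L^\infty(D)\to C^{\alpha'}(D)$ for any $\alpha<\alpha'<1$; since $D\subset\subset X$ is relatively compact, the embedding $C^{\alpha'}(\overline D)\hookrightarrow C^\alpha(\overline D)$ is compact (Arzel\`a--Ascoli: a bounded set in $C^{\alpha'}$ is uniformly bounded and equicontinuous, and the H\"older seminorms of order $\alpha$ are controlled by interpolation $\|\cdot\|_{C^\alpha}\le \|\cdot\|_\infty^{1-\alpha/\alpha'}\|\cdot\|_{C^{\alpha'}}^{\alpha/\alpha'}$), so the composition $L^\infty(D)\to C^{\alpha'}(D)\hookrightarrow C^\alpha(D)$ is compact. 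The main obstacle in all of this is the careful bookkeeping in the far-part estimate: the factor $\|z\|^\gamma/\|\zeta\|^\gamma$ is not bounded a priori, and one must split according to whether $\|z\|$ is small or large compared with $\delta$ and use that on the relevant subregions $\|\zeta\|$ is comparable to $\max(\|z\|,\|\zeta-z\|)$, exactly the type of three-region decomposition carried out in the proof of Lemma~\ref{lem:estimate5}; once that is set up, all the integrals reduce to Lemmas~\ref{lem:estimate2}, \ref{lem:estimate3} and~\ref{lem:estimate8}.
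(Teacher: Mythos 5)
Your overall strategy is the same as the paper's: reduce to the uniform estimate $\int_D|\tilde k_\gamma(\zeta,z)-\tilde k_\gamma(\zeta,z')|\,dV_X\lesssim\|z-z'\|^\alpha$, split $D$ into a part near the evaluation points and a far part, estimate the two kernels separately on the near part, use a product-rule/mean-value decomposition on the far part, and get compactness from Arzel\`a--Ascoli. The only structural difference is that the paper uses four regions --- $B_r(z)$, $B_r(w)$, $(B_r(0))\setminus(\dots)$ and the rest, with $r=\|z-w\|/2$ --- whereas you use two; excising $B_r(0)$ separately lets the paper reduce every far-region integrand to pure radial powers $\|\zeta\|^{-2n}+\|\zeta-z\|^{-2n}$ integrated over annuli (Lemma~\ref{lem:estimate2}, giving $r(1+|\log r|)$), while you must keep two-center integrands and invoke Lemma~\ref{lem:estimate3}.

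That choice is where your write-up has a genuine gap: in the far-region term carrying $|\|z\|^\gamma-\|z'\|^\gamma|$, you bound this difference by $\delta$ (Lipschitz) and then claim the remaining integral is $\lesssim\delta^\alpha$ ``directly by Lemma~\ref{lem:estimate3}.'' But for $\gamma>1$ the exponents $\gamma$ and $2n-1$ sum to more than $2n$, so Lemma~\ref{lem:estimate3} yields $\|z\|^{2n-\gamma-(2n-1)}=\|z\|^{1-\gamma}$, which is unbounded as $z\to 0$; the resulting bound $\delta\,\|z\|^{1-\gamma}$ is not $\lesssim\delta^\alpha$ uniformly (take $\|z\|\ll\delta$). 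The cancellation you need comes from \emph{not} discarding the factor $\max(\|z\|,\|z'\|)^{\gamma-1}$ in $a^\gamma-b^\gamma=(a-b)(a^{\gamma-1}+\dots+b^{\gamma-1})$: after ordering $\|z'\|\le\|z\|$ this factor exactly cancels the $\|z\|^{1-\gamma}$ from Lemma~\ref{lem:estimate3}. The paper instead redistributes $\|z\|^{\gamma-1}\le(\|\zeta\|+\|\zeta-z\|)^{\gamma-1}$ to produce terms $\|\zeta\|^{-(\gamma-\ell)}\|\zeta-z\|^{-(2n-(\gamma-\ell))}$ of total degree exactly $2n$ and integrates them over the region avoiding both $B_r(0)$ and $B_r(z)$, which is why it needs the extra region $W_3$. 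The same remark applies, less critically, to your near-part dichotomy, which as stated is loose but is repaired by the three-region splitting you yourself point to (as in the proof of Lemma~\ref{lem:estimate5}). With these repairs your argument closes and is essentially the paper's proof.
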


If $\gamma = 0$, then a standard proof from the case $X=\C^n$,
as for example \cite[Proposition~III.2.1]{LT}, works, by using Lemma~\ref{lem:estimate3}.
We will adapt this proof to work also for $\gamma > 0$.

\begin{proof}
    Since
    \begin{equation*}
        |{\bf T}(f)(z) - {\bf T}(f)(w)| \leq \|f\|_{L^\infty(D)} \int_D | \tilde{k}_\gamma(\zeta,z) - \tilde{k}_{\gamma}(\zeta,w)| dV_X(\zeta),
    \end{equation*}
    in order to prove the continuity as a map $L^\infty(D) \rightarrow C^\alpha(D)$
    it is enough to prove that for $\alpha < 1$ fixed,
    \begin{equation} \label{eq:calpha}
        \int_D | \tilde{k}_\gamma(\zeta,z) - \tilde{k}_{\gamma}(\zeta,w) | dV_X(\zeta) \lesssim \|z-w\|^\alpha.
    \end{equation}
    for $z,w \in D$.
    In order to do this, we let $r := \|z-w\|/2$, and partition $D$ into
    \begin{align*}
        W_1 &:= D \cap B_r(z)\text{, } W_2 := D\cap B_r(w) \text{, } W_3 := (D\setminus (W_1 \cup W_2)) \cap B_r(0)
        \text{ and } \\ W_4 &:= D\setminus (W_1 \cup W_2 \cup W_3),
    \end{align*}
    and prove the inequality for the integrals over each of the $W_i$'s.
    Using that $\|z\| \leq \|\zeta\| + \|\zeta-z\|$, we get that
    \begin{align*}
        &\int_{W_1} | \tilde{k}_\gamma(\zeta,z) - \tilde{k}_{\gamma}(\zeta,w) | dV_X(\zeta) \lesssim \\
        \sum_{k=0}^{2n-1} & \int_{B_r(z) \cap X} \frac{1}{\|\zeta\|^k \|\zeta-z\|^{2n-k-1}} + \frac{1}{\|\zeta\|^k \|\zeta-w\|^{2n-k-1}} dV_X(\zeta) \lesssim \\
        &\int_{B_r(z) \cap X} \max \left\{ \frac{1}{\|\zeta\|^{2n-1}}, \frac{1}{\|\zeta-z\|^{2n-1}} \right\} +
        \max \left\{ \frac{1}{\|\zeta\|^{2n-1}}, \frac{1}{\|\zeta-w\|^{2n-1}} \right\} dV_X(\zeta) \lesssim \\
        &\int_{B_r(z) \cap X} \frac{1}{\|\zeta\|^{2n-1}} + \frac{1}{\|\zeta-z\|^{2n-1}}  +
        \frac{1}{\|\zeta-w\|^{2n-1}}  dV_X(\zeta) \lesssim r
    \end{align*}
    where the last inequality follows by Lemma~\ref{lem:estimate8}.
    By symmetry, we get the same estimate for the integral on $W_2$.
    In the same way as for the calculation on $W_1$, but using that on $W_3$, $\|\zeta-z\| \geq r$, and $\|\zeta-w\| \geq r$,
    we get that
    \begin{align*}
        \int_{W_3} | \tilde{k}_\gamma(\zeta,z) - \tilde{k}_{\gamma}(\zeta,w) | dV_X(\zeta) \lesssim 
        \sum_{k=0}^{2n-1} & \frac{1}{r^{2n-k-1}} \int_{B_r(0) \cap X} \frac{1}{\|\zeta\|^k} dV_X(\zeta)
         \lesssim r,
    \end{align*}
    where we used Lemma~\ref{lem:estimate2} for the last inequality.

    Finally, we consider the integral on $W_4$. By possibly switching the roles of $z$ and $w$,
    we can assume that $\|w\| \leq \|z\|$.
    First, we write
    \begin{align*}
        |\tilde{k}_\gamma(\zeta,z)-\tilde{k}_\gamma(\zeta,w)| \leq
        \frac{|\|z\|^\gamma-\|w\|^\gamma|}{\|\zeta\|^\gamma} \|\tilde{k}_0(\zeta,z)\|
        +\frac{\|w\|^\gamma}{\|\zeta\|^\gamma} |\tilde{k}_0(\zeta,z) - \tilde{k}_0(\zeta,w)|.
    \end{align*}
    If we consider the first term, and use the reverse triangle inequality $|\|z\|-\|w\|| \leq \|z-w\|$,
    $a^\gamma-b^\gamma=(a-b)(a^{\gamma-1} + \dots + b^{\gamma-1})$, $\max(a,b) \leq a+b$ if $a,b \geq 0$,
    and the assumption that $\|w\|\leq \|z\|$, we get that
    \begin{align*}
        \frac{|\|z\|^\gamma-\|w\|^\gamma|}{\|\zeta\|^\gamma} \|\tilde{k}_0(\zeta,z)\|
        \leq &\|z-w\| \sum_{\ell=0}^{\gamma-1} \frac{1}{\|\zeta\|^{\gamma-\ell}\|\zeta-z\|^{2n-(\gamma-\ell)}} \\
        \lesssim &\|z-w\| \left(\frac{1}{\|\zeta\|^{2n}} + \frac{1}{\|\zeta-z\|^{2n}}\right).
    \end{align*}
    Since $W_4 \subseteq B_R(0) \setminus B_r(0)$, and $W_4 \subseteq B_R(z) \setminus B_r(z)$,
    for $R \gg 0$, we get by Lemma~\ref{lem:estimate2} that
    \begin{equation*}
        \int_{W_4}\frac{|\|z\|^\gamma-\|w\|^\gamma|}{\|\zeta\|^\gamma} \|\tilde{k}_0(\zeta,z)\| \leq \|z-w\|(1+ |\log \|z-w\||).
    \end{equation*}

    Finally, as in the proof of \cite[Lemma~III.2.2]{LT},
    \begin{equation*}
        |\tilde{k}_0(\zeta,z)-\tilde{k}_0(\zeta,w)| \lesssim \|z-w\| \max \left\{ \frac{1}{\|\zeta-z\|^{2n}},\frac{1}{\|\zeta-w\|^{2n}}\right\} 
        \leq \|z-w\| \left(\frac{1}{\|\zeta-z\|^{2n}}+\frac{1}{\|\zeta-w\|^{2n}}\right).
    \end{equation*}
    Thus, using that $\|w\| \leq \|z\| \leq \|\zeta\| + \|\zeta-z\|$, and $\|w\| \leq \|\zeta\| + \|\zeta-w\|$,
    we get that
    \begin{align*}
        \frac{\|w\|^\gamma}{\|\zeta\|^\gamma} |\tilde{k}_0(\zeta,z) - \tilde{k}_0(\zeta,w)| \lesssim & 
        \|z-w\| \sum_{\ell=0}^{\gamma}\left( \frac{1}{\|\zeta\|^\ell \|\zeta-z\|^{2n-\ell}}
        + \frac{1}{\|\zeta\|^\ell \|\zeta-w\|^{2n-\ell}}\right) \\
    \lesssim & \|z-w\| \left( \frac{1}{\|\zeta\|^{2n}} + \frac{1}{\|\zeta-z\|^{2n}} + \frac{1}{\|\zeta-w\|^{2n}} \right)
    \end{align*}
    Since $W_4$ is contained in $B_R(0)\setminus B_r(0)$, $B_R(z)\setminus B_r(z)$ and $B_R(w) \setminus B_r(w)$
    if $R \gg 0$, we get by Lemma~\ref{lem:estimate2} that
    \begin{equation*}
        \int_{W_4} \frac{\|w\|^\gamma}{\|\zeta\|^\gamma} |\tilde{k}_0(\zeta,z) - \tilde{k}_0(\zeta,w)| \lesssim r(1+ |\log r|).
    \end{equation*}

    Combining the estimates for the integrals of the left-hand side of \eqref{eq:calpha} on $W_1,W_2,W_3$ and $W_4$,
    we get that the integral on $D$ is bounded by some constant times $r (1 + |\log r|)$, and since $r = \|z-w\|/2$,
    we get that \eqref{eq:calpha} holds for any $\alpha < 1$.

    Since \eqref{eq:calpha} holds uniformly for $z,w$ in $D$, if $\{\varphi_j\}$ is a uniformly bounded sequence in $L^\infty(D')$,
    then $\{{\bf T}(\varphi_j)\}$ is equicontinuous in the $C^\alpha(\overline{D})$-norm, and thus,
    ${\bf T}$ is compact by the Arzel\`a-Ascoli theorem.
\end{proof}

\medskip
\subsection{Estimates for cut-off and approximation procedures}

In order to prove $\dq$-homotopy formulas, we will need to approximate $L^p$-forms
in an appropriate way by smooth forms. For this purpose, we require
the following cut-off estimate for the integral kernels $k_\gamma(\zeta,z)$.

\begin{thm}\label{thm:lp-estimate2}
Let $D\subseteq D' \subset\subset X$ be bounded domains in $X$.
Let $\gamma\in\Z$, $0\leq \gamma <  2n-1$, and let $\frac{2n}{2n-(\gamma+1)} \leq p < \infty$. 
For any integer $m\geq 0$ let $r_m:=e^{-e^m}$.
Then the integral operators
\begin{eqnarray*}
f \mapsto {\bf T}_m(f)(z):=\int_{D'\cap \big(B_{r_m(0)}\setminus\overline{B_{r_{m+1}}(0)}\big)} 
f(\zeta) \frac{k_\gamma(\zeta,z)}{\|\zeta\| \big| \log \|\zeta\|\big|} dV_X(\zeta)
\end{eqnarray*}
define bounded linear operators ${\bf T}_m: L^p(D') \rightarrow L^p(D)$ such that
\begin{eqnarray*}
{\bf T}_m f &\rightarrow& 0 \ \ \mbox{ in } \ L^p(D)
\end{eqnarray*}
for $m\rightarrow \infty$.
\end{thm}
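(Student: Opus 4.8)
The plan is to mimic the proof of Lemma~\ref{lem:cpt-cutoff-estimate} and Theorem~\ref{thm:lp-estimate}, but now the relevant radial estimate near the origin must come from Lemma~\ref{lem:estimate5} and Lemma~\ref{lem:integral-loglog} instead of from Lemma~\ref{lem:estimate3}. First I would set $p^* := p/(p-1)$, so that the hypothesis $p \geq 2n/(2n-(\gamma+1))$ translates into $(\gamma+1)p^* \leq 2n$; this is exactly the borderline exponent that will make the inner integral over $\zeta$ (weighted by the extra factor $1/(\|\zeta\|\,|\log\|\zeta\||)$) bounded \emph{uniformly in $m$} via Lemma~\ref{lem:estimate5}. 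Explicitly, write $k_\gamma(\zeta,z)/(\|\zeta\|\,|\log\|\zeta\||) = \|z\|^\gamma / (\|\zeta\|^{\gamma+1}\,|\log\|\zeta\||\,\|\zeta-z\|^{2n-1})$, and split $\|\zeta-z\|^{-(2n-1)} = \|\zeta-z\|^{-(2n-1)/p} \cdot \|\zeta-z\|^{-(2n-1)/p^*}$, while the $\|z\|^\gamma/\|\zeta\|^{\gamma+1}$ factor and the $|\log\|\zeta\||^{-1}$ factor go entirely into the $p^*$-part.

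Next, applying H\"older's inequality in $\zeta$ exactly as in the proof of Theorem~\ref{thm:lp-estimate}, one gets
\begin{equation*}
    \|{\bf T}_m f\|_{L^p(D)}^p \leq \int_D \int_{Y_m} \frac{|f(\zeta)|^p}{\|\zeta-z\|^{2n-1}}\,dV_X(\zeta)\, \big(J_m(z)\big)^{p/p^*}\,dV_X(z),
\end{equation*}
where $Y_m := D' \cap (B_{r_m}(0)\setminus\overline{B_{r_{m+1}}(0)})$ and
\begin{equation*}
    J_m(z) := \int_{Y_m} \frac{\|z\|^{\gamma p^*}}{\|\zeta\|^{(\gamma+1) p^*}\,|\log\|\zeta\||^{p^*}\,\|\zeta-z\|^{2n-1}}\,dV_X(\zeta).
\end{equation*}
Here I would use that $|\log\|\zeta\||^{-p^*} \leq |\log\|\zeta\||^{-1}$ on $Y_m$ (since $\|\zeta\|$ is small so $|\log\|\zeta\||\geq 1$ and $p^*\geq 1$), and then invoke Lemma~\ref{lem:estimate5} with $\alpha = (\gamma+1)p^* \leq 2n$ and $\beta = 2n-1 < 2n$: this gives $J_m(z) \lesssim \|z\|^{\gamma p^*} \cdot \|z\|^{2n - (\gamma+1)p^* - (2n-1)} = \|z\|^{\gamma p^*}\cdot\|z\|^{1-(\gamma+1)p^*}$ in the case $\alpha+\beta > 2n$, and $J_m(z)\lesssim \|z\|^{\gamma p^*}\lesssim 1$ otherwise — in either case $J_m(z) \lesssim 1$ \emph{uniformly in $m$ and $z\in \overline D$} (in the first case the exponent $1 - p^* \leq 0$, and $\|z\|$ is bounded). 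Thus $\|{\bf T}_m f\|_{L^p(D)}^p \lesssim \int_D \int_{Y_m} |f(\zeta)|^p \|\zeta-z\|^{-(2n-1)}\,dV_X(\zeta)\,dV_X(z)$, and after Fubini and Lemma~\ref{lem:estimate3} (for the $z$-integral, $2n-1 < 2n$) this is $\lesssim \int_{Y_m} |f(\zeta)|^p\,dV_X(\zeta)$, which bounds $\|{\bf T}_m f\|_{L^p(D)}$ by a constant (independent of $m$) times $\|f\|_{L^p(Y_m)}$; in particular each ${\bf T}_m$ is bounded $L^p(D')\to L^p(D)$.

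The convergence ${\bf T}_m f \to 0$ is then immediate: since $Y_m \subseteq B_{r_m}(0)$ and $r_m \to 0$, the sets $Y_m$ shrink to a null set (indeed they are disjoint shells exhausting a neighborhood of $0$), so $\|f\|_{L^p(Y_m)} \to 0$ by dominated convergence for any fixed $f \in L^p(D')$, and hence $\|{\bf T}_m f\|_{L^p(D)} \lesssim \|f\|_{L^p(Y_m)} \to 0$. I expect the main obstacle to be bookkeeping the borderline exponent carefully: one must check that $(\gamma+1)p^* \leq 2n$ is exactly equivalent to $p \geq 2n/(2n-(\gamma+1))$ (which needs $\gamma+1 < 2n$, i.e.\ the hypothesis $\gamma < 2n-1$, so that the right-hand side is positive), and that in the boundary case $(\gamma+1)p^* = 2n$ one genuinely lands in the $\alpha+\beta = 2n$-free formulation of Lemma~\ref{lem:estimate5} — which is why that lemma is stated with the weight $|\log\|\zeta\||^{-1}$ and allows $\alpha = 2n$, absorbing precisely this logarithmic loss. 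The replacement of $|\log\|\zeta\||^{-p^*}$ by $|\log\|\zeta\||^{-1}$ is the one slightly delicate inequality, valid because on the domain of integration $\|\zeta\| < r_0 = e^{-1} < 1$ forces $|\log\|\zeta\|| > 1$.
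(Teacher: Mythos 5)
There is a genuine error in your estimate of $J_m(z)$. After applying Lemma~\ref{lem:estimate5} with $\alpha=(\gamma+1)p^*$ and $\beta=2n-1$ you correctly land in the case $\alpha+\beta>2n$ (note this is in fact the \emph{only} case that occurs, since $(\gamma+1)p^*>1$ whenever $\gamma\geq 0$ and $p<\infty$), and you correctly compute $J_m(z)\lesssim \|z\|^{\gamma p^*}\cdot\|z\|^{1-(\gamma+1)p^*}=\|z\|^{1-p^*}$. But your parenthetical justification that this is $\lesssim 1$ ``because the exponent $1-p^*\leq 0$ and $\|z\|$ is bounded'' is backwards: a negative exponent on a quantity that tends to $0$ blows up, and since the origin lies in $\overline D$, $\|z\|^{1-p^*}$ is unbounded on $D$. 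So the claimed uniform bound $J_m(z)\lesssim 1$ is false, and the subsequent inequality $\|{\bf T}_m f\|_{L^p(D)}^p\lesssim\int_D\int_{Y_m}|f(\zeta)|^p\|\zeta-z\|^{-(2n-1)}\,dV_X(\zeta)\,dV_X(z)$ does not follow. If you instead carry the factor $(J_m(z))^{p/p^*}=\|z\|^{(1-p^*)p/p^*}=\|z\|^{-1}$ honestly through Fubini, the $z$-integral becomes $\int_D\|z\|^{-1}\|\zeta-z\|^{-(2n-1)}\,dV_X(z)$, which is the borderline case $\alpha+\beta=2n$ of Lemma~\ref{lem:estimate3} and produces a factor $|\log\|\zeta\||$; on $Y_m$ this factor is of size $e^m$, so your final bound would read $\|{\bf T}_m f\|_{L^p(D)}^p\lesssim e^{m}\|f\|_{L^p(Y_m)}^p$, with a constant that degenerates as $m\to\infty$, and the convergence argument collapses.

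The repair is exactly the point where your splitting differs from the paper's: you put the entire factor $|\log\|\zeta\||^{-1}$ into the $p^*$-part of the H\"older decomposition (and then discard the surplus power $|\log\|\zeta\||^{-(p^*-1)}$), whereas the paper splits it as $|\log\|\zeta\||^{-1/p}\cdot|\log\|\zeta\||^{-1/p^*}$, keeping one full power $|\log\|\zeta\||^{-1}$ attached to $|f(\zeta)|^p$ after raising to the $p$-th power. That retained logarithm is precisely what cancels the $|\log\|\zeta\||$ produced by the borderline $z$-integral, yielding $I_m\lesssim\|f\|_{L^p(D_m)}^p$ with a constant independent of $m$. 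Your concluding step ($\|f\|_{L^p(Y_m)}\to 0$ since the shells shrink) is fine once that uniform bound is in place, but as written the key uniform estimate is not established.
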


\begin{proof}
    To simplify the notation, let $D_m:=D' \cap \big(B_{r_m(0)}\setminus\overline{B_{r_{m+1}}(0)}\big)$.
    As in the proof of Theorem \ref{thm:lp-estimate}, we use the H\"older inequality with $1/p+1/p^*=1$ as follows:
    \begin{align*}
        I_m := &\int_D  \left| \int_{D_m} f(\zeta) \frac{k_\gamma(\zeta,z)}{\|\zeta\|\big| \log\|\zeta\|\big|} dV_X(\zeta) \right|^p dV(z) \\ =
        &\int_D \left(\int_{D_m} \left(\frac{|f(\zeta)|^p}{\big|\log \|\zeta\|\big|  \|\zeta-z\|^{2n-1}} \right)^{1/p} \right.\\        
        & \cdot \left. \left( \frac{\|z\|^{p^*\gamma}}{\|\zeta\|^{p^*(\gamma+1)} \big| \log \|\zeta\|\big| \|\zeta-z\|^{2n-1}}\right)^{1/p^*}  
        dV_X(\zeta) \right)^p dV_X(z) \\ \leq
        &\int_D \left( \int_{D_m} \frac{|f(\zeta)|^p}{\big|\log\|\zeta\|\big| \|\zeta-z\|^{2n-1}}  dV_X(\zeta) \right)\\
        & \cdot \left( \int_{D_m} \frac{\|z\|^{p^*\gamma}}{\|\zeta\|^{p^*(\gamma+1)}\big|\log\|\zeta\|\big|\|\zeta-z\|^{2n-1}} dV_X(\zeta)\right)^{p/p^*} dV_X(z).
    \end{align*}
    As in the proof of Theorem~\ref{thm:lp-estimate}, if $p \geq \frac{2n}{2n-(\gamma+1)}$, then $p^*(\gamma+1) \leq 2n$.
    Inserting
     \begin{eqnarray*}   
    \int_{D_m} \frac{\|z\|^{p^*\gamma}}{\|\zeta\|^{p^*(\gamma+1)}\big|\log\|\zeta\|\big|\|\zeta-z\|^{2n-1}} dV_X(\zeta) 
    &\lesssim& \|z\|^{p^*\gamma+2n-(2n-1)-p^*(\gamma+1)} = \|z\|^{1-p^*},
    \end{eqnarray*}
    which we get by use of Lemma  \ref{lem:estimate5} (since $p^*(\gamma+1) \leq 2n$),
    and applying the Fubini theorem gives (keep in mind that $\frac{1}{p^*}(1-p^*)=- \frac{1}{p}$):
    \begin{eqnarray*}
    I_m &\lesssim& \int_{D_m} \frac{|f(\zeta)|^p}{\big|\log\|\zeta\|\big|} \int_D \frac{dV_X(z)}{\|z\|\|\zeta-z\|^{2n-1}} dV_X(\zeta)\\
    &\lesssim& \int_{D_m} |f(\zeta)|^p dV_X(\zeta) = \|f\|_{L^p(D_m)}^p,
    \end{eqnarray*}
    where we have applied Lemma \ref{lem:estimate3} once more. 
    
    But now $\|f\|_{L^p(D_m)} \rightarrow 0$ for $k\rightarrow \infty$ because the domain of integration vanishes and $p<\infty$
    (see e.g. \cite{Alt}, A.1.16.2).
 \end{proof}

\medskip
\section{Approximation by smooth forms}\label{sec:cut-off}

\subsection{Cut-off functions}\label{ssec:cut-off}

We will use the following cut-off functions to approximate forms
by forms with support away from the singularity in different situations.

As in \cite{PS}, Lemma 3.6, let $\rho_k: \R\rightarrow [0,1]$, $k\geq 1$, be smooth cut-off functions
satisfying 
$$\rho_k(x)=\left\{\begin{array}{ll}
1 &,\  x\leq k,\\
0 &,\  x\geq k+1,
\end{array}\right.$$
and $|\rho_k'|\leq 2$. Moreover, let $r: \R\rightarrow [0,1/2]$ be a smooth increasing function such that
$$r(x)=\left\{\begin{array}{ll}
x &,\ x\leq 1/4,\\
1/2 &,\ x\geq 3/4,
\end{array}\right.$$
and $|r'|\leq 1$.
As cut-off functions we will use 
\begin{eqnarray*}
\mu_k(\zeta):=\rho_k\big(\log(-\log r(\|\zeta\|))\big)
\end{eqnarray*} 
on $X$. Note that
\begin{eqnarray}\label{eq:cutoff2}
\big| \dq \mu_k(\zeta)\big| \lesssim \frac{\chi_k(\|\zeta\|)}{\|\zeta\| \big| \log\|\zeta\|\big|},
\end{eqnarray}
where $\chi_k$ is the characteristic function of $[e^{-e^{k+1}}, e^{-e^k}]$.

\begin{lem}\label{lem:cut-off}
Let $X$ be an analytic variety of pure dimension $n$ in $\C^N$, $D\subset\subset X$ an open subset and let $\varphi\in L^p_{0,q}(D)$
with $\dq_w \varphi\in L^r_{0,q+1}(D)$, where $\frac{2n}{2n-1}\leq p \leq \infty$ and $1\leq r \leq \infty$.
Let
\begin{eqnarray*}
\varphi_k &:=& \mu_k \varphi
\end{eqnarray*}
and define $1\leq \lambda \leq 2n$ by the relation
\begin{eqnarray}\label{defn:lambda}
\frac{1}{\lambda} &=& \frac{1}{p} + \frac{1}{2n}.
\end{eqnarray}
Then
\begin{eqnarray*}
\varphi_k \rightarrow \varphi  &\mbox{ in }& L^p_{0,q}(D),\\
\dq \varphi_k \rightarrow \dq_w \varphi &\mbox{ in }& L^\gamma_{0,q+1}(D),
\end{eqnarray*}
where $\gamma=\min\{\lambda, r\}$.
\end{lem}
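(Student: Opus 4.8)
The plan is to prove the two convergences separately, using the explicit form of the cut-off functions $\mu_k$ and the estimates from Section~\ref{sect:basic-estimates}. For the first convergence $\varphi_k \to \varphi$ in $L^p_{0,q}(D)$, I would write $\varphi - \varphi_k = (1-\mu_k)\varphi$; since $0 \leq \mu_k \leq 1$ and $\mu_k \equiv 1$ outside a shrinking neighborhood of the singularity (precisely, $1-\mu_k$ is supported where $\log(-\log r(\|\zeta\|)) \geq k$, i.e.\ on a set $\{\|\zeta\| \leq e^{-e^k}\}$ whose volume tends to $0$ by Lemma~\ref{lem:estimate2}), we have $|(1-\mu_k)\varphi| \leq |\varphi| \cdot \mathbf{1}_{\{\|\zeta\|\leq e^{-e^k}\}}$, which tends to $0$ in $L^p$ by dominated convergence when $p < \infty$, and this also works for $p=\infty$ because $\varphi \in L^\infty$ means we need a slightly different argument — actually for $p = \infty$ one just notes the support shrinks but $\varphi$ need not vanish there; however the statement only claims $L^p$-convergence, so for $p = \infty$ I would instead invoke that this is the standard fact (or restrict attention; in fact the lemma as used likely only needs $p<\infty$ for this part, or one reinterprets). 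Let me keep it clean: for $p<\infty$, dominated convergence gives it immediately; I will state it that way.

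For the second convergence, I would expand $\dq \varphi_k = \mu_k \dq_w\varphi + \dq\mu_k \wedge \varphi$ (valid since $\mu_k$ is smooth, by the Leibniz rule for the weak $\dq$). The term $\mu_k \dq_w\varphi \to \dq_w\varphi$ in $L^r_{0,q+1}(D)$ exactly as in the first part (same shrinking-support argument), hence in $L^\gamma$ since $\gamma \leq r$ and $D$ is bounded. The main work is showing $\dq\mu_k \wedge \varphi \to 0$ in $L^\lambda_{0,q+1}(D)$. Using the estimate \eqref{eq:cutoff2}, $|\dq\mu_k \wedge \varphi| \lesssim \frac{\chi_k(\|\zeta\|)}{\|\zeta\| |\log\|\zeta\||}|\varphi|$, where $\chi_k$ is the characteristic function of the annulus $A_k := \{e^{-e^{k+1}} \leq \|\zeta\| \leq e^{-e^k}\}$. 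By Hölder's inequality on $D \cap A_k$ with exponents $p/\lambda$ and its conjugate (note $\lambda \leq p$ by \eqref{defn:lambda}, and $1/\lambda - 1/p = 1/(2n)$ so the conjugate exponent applied to $\lambda$ gives the relevant power $2n$), one gets
\begin{equation*}
    \|\dq\mu_k \wedge \varphi\|_{L^\lambda(D)}^\lambda \lesssim \|\varphi\|_{L^p(D \cap A_k)}^\lambda \left( \int_{D \cap A_k} \frac{dV_X(\zeta)}{\|\zeta\|^{2n}|\log\|\zeta\||^{2n \cdot \lambda / (2n - \lambda) \cdot 1/(2n)}}\right)^{\text{(appropriate power)}},
\end{equation*}
and the key point is that the exponent on $|\log\|\zeta\||$ in that integral comes out to be exactly $1$ (this is where the precise relation \eqref{defn:lambda} is used), so the integral is $\lesssim 1$ uniformly in $k$ by Lemma~\ref{lem:integral-loglog} (with $m = k$). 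Then $\|\varphi\|_{L^p(D \cap A_k)} \to 0$ as $k \to \infty$ because $\varphi \in L^p$ and the volume of $A_k$ tends to $0$. Combining, $\dq\mu_k \wedge \varphi \to 0$ in $L^\lambda$, hence in $L^\gamma$.

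The main obstacle — and the step requiring care — is the bookkeeping in the Hölder estimate for $\dq\mu_k \wedge \varphi$: one must track exactly how the power of $|\log\|\zeta\||$ balances so that Lemma~\ref{lem:integral-loglog} (rather than a divergent integral) applies, and this forces the specific definition \eqref{defn:lambda} of $\lambda$. I would verify: with $1/\lambda = 1/p + 1/(2n)$, applying Hölder with the pair $(p/\lambda, (p/\lambda)')$ where $(p/\lambda)' = p/(p-\lambda) = 2n \cdot \lambda^{-1} \cdot (\lambda^{-1} - p^{-1})^{-1}\cdots$ — concretely $1/(p/\lambda)' = 1 - \lambda/p = \lambda/(2n)$, so the factor $\frac{1}{\|\zeta\|^{2n}|\log\|\zeta\||}$ raised to the power $(p/\lambda)'$ and then to the power $1/(p/\lambda)' \cdot (1/\lambda) \cdot \lambda$ wait — cleaner: $\left\| \frac{\chi_k |\varphi|}{\|\zeta\||\log\|\zeta\||}\right\|_{L^\lambda}\leq \|\varphi\|_{L^p(A_k)} \left\| \frac{\chi_k}{\|\zeta\||\log\|\zeta\||}\right\|_{L^{2n}}$ since $1/\lambda = 1/p + 1/(2n)$; and $\left\|\frac{\chi_k}{\|\zeta\||\log\|\zeta\||}\right\|_{L^{2n}}^{2n} = \int_{D\cap A_k} \frac{dV_X}{\|\zeta\|^{2n}|\log\|\zeta\||^{2n}}$ — hmm, that has $|\log|^{2n}$, not $|\log|^1$. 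So in fact one should \emph{not} put the full $1/|\log\|\zeta\||$ into the $L^{2n}$ factor; instead split $\frac{1}{|\log\|\zeta\||} = \frac{1}{|\log\|\zeta\||^{1-\lambda/(2n)}} \cdot \frac{1}{|\log\|\zeta\||^{\lambda/(2n)}}$ and absorb the second piece so that after raising to power $2n$ one gets exponent $1$; alternatively, since on $A_k$ one has $|\log\|\zeta\|| \geq e^k$, one can simply bound the extra log-powers by constants and reduce to $\int_{D\cap A_k}\frac{dV_X}{\|\zeta\|^{2n}|\log\|\zeta\||}\lesssim 1$ from Lemma~\ref{lem:integral-loglog}. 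I expect this second route — crude bounds on the surplus logarithms using $|\log\|\zeta\|| \gtrsim e^k \geq 1$ on $A_k$ together with Lemma~\ref{lem:integral-loglog} — to be the cleanest way to close the argument.
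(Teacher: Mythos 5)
Your skeleton coincides with the paper's: dominated convergence for $\varphi_k\to\varphi$ and $\mu_k\dq_w\varphi\to\dq_w\varphi$, the Leibniz expansion $\dq\varphi_k=\mu_k\dq_w\varphi+\dq\mu_k\wedge\varphi$, and H\"older with the exponent pair $(p,2n)$ dictated by \eqref{defn:lambda}, which together with \eqref{eq:cutoff2} reduces everything to controlling $\|\dq\mu_k\|_{L^{2n}}$ on the annulus $A_k=\supp\chi_k$. The place where you diverge is the very last step, and that is where there is a genuine gap. You settle for the \emph{uniform} bound $\int_{D\cap A_k}\|\zeta\|^{-2n}|\log\|\zeta\||^{-1}\,dV_X\lesssim 1$ of Lemma~\ref{lem:integral-loglog} and make the product tend to zero through the factor $\|\varphi\|_{L^p(D\cap A_k)}$. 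That factor does not tend to zero when $p=\infty$ (take $\varphi$ bounded away from zero near the origin), and $p=\infty$ is not a removable case: it is admitted by the hypotheses, and it is precisely the case in which the lemma is later invoked in the proof of Lemma~\ref{lem:cut-off2} (``with $p=\infty$ and $\lambda=2n$''). The paper instead places the decay in the other H\"older factor: since $|\log\|\zeta\||\ge 1$ on $A_k$, one has $\|\dq\mu_k\|_{L^{2n}}^{2n}\le\int_{X\cap A_k}\|\zeta\|^{-2n}\log^{-2}\|\zeta\|\,dV_X$, and this tends to zero because a \emph{fixed} integrable function (Lemma~\ref{lem:integral-log}) is integrated over a set whose measure shrinks to zero; no smallness of $\|\varphi\|_{L^p(A_k)}$ is needed. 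Your own ``crude bound on the surplus logarithms'' device repairs this verbatim if you stop at exponent $2$ rather than exponent $1$ and cite Lemma~\ref{lem:integral-log} instead of Lemma~\ref{lem:integral-loglog}.

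A secondary remark: your hesitation about the first convergence for $p=\infty$ is well founded --- $\|(1-\mu_k)\varphi\|_{L^\infty}$ need not tend to zero --- and the paper's one-line appeal to dominated convergence does not address this either; in the only application with $p=\infty$ the first conclusion is used only in $L^{p'}$ for finite $p'\le 2n$, where it does hold. But for the purposes of matching the paper's proof, the decisive defect of your write-up is the $p=\infty$ failure in the $\dq\mu_k\wedge\varphi$ estimate described above.
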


\begin{proof}
Is is easy to see by Lebesgue's theorem on dominated convergence that
\begin{eqnarray*}
\varphi_k = \mu_k \varphi \rightarrow \varphi \ \ \ \mbox{ in } L^p_{0,q}(D)\ \ \ ,\ \ \ 
\mu_k \dq_w \varphi \rightarrow \dq_w \varphi \ \ \ \mbox{ in } L^r_{0,q+1}(D).
\end{eqnarray*}
It just remains to show that
\begin{eqnarray*}
\dq\mu_k \wedge \varphi &\rightarrow& 0 \ \ \ \mbox{ in } L^\gamma_{0,q+1}(D).
\end{eqnarray*}
So, we use the H\"older inequality (with the relation \eqref{defn:lambda}) to estimate
\begin{eqnarray*}
\|\dq\mu_k \wedge \varphi\|_{L^\gamma} &\leq& \|\varphi\|_{L^p} \|\dq\mu_k\|_{L^{2n}}.
\end{eqnarray*}
But by use of \eqref{eq:cutoff2} we get
\begin{eqnarray*}
\|\dq\mu_k\|^{2n}_{L^{2n}} \leq \int_{X\cap \supp \chi_k} \frac{dV_X(\zeta)}{\|\zeta\|^{2n} \log^{2n} \|\zeta\|} 
\leq \int_{X\cap \supp \chi_k} \frac{dV_X(\zeta)}{\|\zeta\|^{2n} \log^{2} \|\zeta\|}
&\rightarrow& 0
\end{eqnarray*}
for $k\rightarrow 0$ because the integrand is integrable over bounded domains in $X$ by Lemma \ref{lem:integral-log}
and the domain of integration vanishes as $k\rightarrow\infty$ (see e.g. \cite{Alt}, A.1.16.2).
\end{proof}

\medskip
\subsection{On the domain of $\dq_s$}

\begin{lem}\label{lem:cut-off2}
Let $X$ be an analytic variety of pure dimension $n$ in $\C^N$ with an isolated singularity at the origin,
$D\subset\subset X$ an open subset with smooth boundary. Let $1\leq p \leq 2n$ and
let $\varphi\in L^p_{0,q}(D)$ such that $\varphi \in \Dom \dq_w^{(p)}$, i.e., $\dq_w \varphi \in L^p_{0,q+1}(D)$.

Then $\varphi\in \Dom\dq_s^{(p)}$ exactly if there
exists a sequence of bounded forms $\varphi_j\in L^\infty_{0,q}(D)$, $\varphi_j\in\Dom\dq_w^{(p)}$,
such that
\begin{eqnarray}\label{eq:appr1}
\varphi_j &\rightarrow& \varphi \ \ \ \mbox{ in } L^p_{0,q}(D),\\
\dq_w \varphi_j &\rightarrow& \dq_w \varphi \ \ \ \mbox{ in } L^p_{0,q+1}(D).\label{eq:appr2}
\end{eqnarray}
\end{lem}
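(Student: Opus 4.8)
The statement is an "iff" whose forward direction is immediate: if $\varphi \in \Dom \dq_s^{(p)}$, then by definition there is a sequence of smooth forms $\psi_j$ with support away from $0$ converging to $\varphi$ in $L^p_{0,q}(D)$ with $\dq \psi_j \to \dq_w \varphi$ in $L^p_{0,q+1}(D)$; smooth forms with compact support away from the singularity are in particular bounded and in $\Dom \dq_w^{(p)}$, so this exact sequence witnesses the right-hand condition. So the content is the reverse direction: given a sequence of bounded forms $\varphi_j \in L^\infty_{0,q}(D) \cap \Dom \dq_w^{(p)}$ satisfying \eqref{eq:appr1}--\eqref{eq:appr2}, I must produce a sequence of \emph{smooth} forms supported away from $0$ approximating $\varphi$ in the graph norm, i.e.\ show $\varphi \in \Dom \dq_s^{(p)}$.

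The plan is to proceed in two reductions. First, cut off each $\varphi_j$ away from the singularity using the functions $\mu_k$ from \eqref{eq:cutoff1}: set $\varphi_{j,k} := \mu_k \varphi_j$. Since $\varphi_j$ is \emph{bounded}, i.e.\ in $L^\infty_{0,q}(D)$, I can apply Lemma~\ref{lem:cut-off} with $p = \infty$ there, so that $\lambda = 2n$ and, because $\dq_w \varphi_j \in L^p_{0,q+1}(D)$ with $p \leq 2n$, one gets $\gamma = \min\{2n, p\} = p$; hence $\varphi_{j,k} \to \varphi_j$ in $L^p_{0,q}(D)$ and $\dq \varphi_{j,k} \to \dq_w \varphi_j$ in $L^p_{0,q+1}(D)$ as $k \to \infty$. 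Each $\varphi_{j,k}$ now has support away from $0$ and is still bounded with $\dq_w$-image in $L^p$. Second, on the set where $\varphi_{j,k}$ is supported, which stays a positive distance from $\Sing X = \{0\}$, the variety $X$ is a complex manifold (with smooth boundary portion of $\partial D$), so the classical Friedrichs-type regularization applies: one can approximate $\varphi_{j,k}$ by genuinely smooth $(0,q)$-forms $\varphi_{j,k,\ell}$, still supported away from $0$, with $\varphi_{j,k,\ell} \to \varphi_{j,k}$ in $L^p_{0,q}(D)$ and $\dq \varphi_{j,k,\ell} \to \dq \varphi_{j,k}$ in $L^p_{0,q+1}(D)$ as $\ell \to \infty$ (using $D$ has smooth boundary so that the smoothing can be arranged near $\partial D$). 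Combining, a diagonal sequence extracted from $\{\varphi_{j,k,\ell}\}$ converges to $\varphi$ in $L^p_{0,q}(D)$ with $\dq$-image converging to $\dq_w \varphi$ in $L^p_{0,q+1}(D)$, which is exactly membership in $\Dom \dq_s^{(p)}$.

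The main obstacle is the honest smoothing step near the boundary $\partial D$: mollification in the ambient manifold does not a priori preserve the property of the form being defined (and of class $L^p$) up to the boundary, which is why the hypothesis that $D$ has smooth boundary is used, via the standard device of locally pushing the form slightly into the interior before convolving with a mollifier (a Friedrichs lemma argument showing $[\dq, \text{mollification}] \to 0$ in $L^p$). A secondary point to check is that all the convergences are in the correct $L^p$-space and not merely in some weaker $L^\gamma$: this is exactly what the case $p=\infty$ of Lemma~\ref{lem:cut-off} buys us, and is the reason the lemma is stated for bounded $\varphi_j$ rather than merely $L^p$ ones. The rest is routine diagonalization, so I would present the two reductions carefully and invoke the classical interior/boundary smoothing as a black box.
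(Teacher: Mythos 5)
Your reverse direction --- the substantive half --- is exactly the paper's argument: cut off with the functions $\mu_k$ and invoke Lemma~\ref{lem:cut-off} with $p=\infty$ (so $\lambda=2n$ and, since $p\leq 2n$, one gets $\gamma=\min\{2n,p\}=p$ and the error term $\dq\mu_k\wedge\varphi_j$ tends to $0$ in $L^p$), then smooth the resulting form, which is now supported away from $\{0\}$, by a Friedrichs-type mollification up to the boundary (the paper cites \cite{Alt}, Lemma A~6.7, and this is where the smooth boundary of $D$ enters), and conclude by an $\epsilon/3$-argument, which is equivalent to your diagonal extraction.

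The one genuine issue is in the forward direction, which you declare immediate. The definition of $\Dom\dq_s^{(p)}$ only provides smooth forms $\psi_j$ on $D$ with $\supp\psi_j\cap\{0\}=\emptyset$ and $\psi_j\in L^p_{0,q}(D)$; it does \emph{not} give compact support in $D$, nor smoothness up to $\overline{D}$, so such a $\psi_j$ may be unbounded near $\partial D$ (e.g.\ a smooth $L^p$ function blowing up at the boundary, cut off near the origin). Your claim that ``smooth forms with compact support away from the singularity are in particular bounded'' therefore rests on a hypothesis that is not part of the definition, and the $\psi_j$ do not automatically witness the right-hand condition, which requires forms in $L^\infty_{0,q}(D)$. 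The fix is the very boundary-smoothing device you already use in the other direction: since each $\psi_j$ is supported away from the singularity, it can be approximated in the graph norm by forms in $C^\infty_{0,q}(\overline{D})$, which are bounded, and a diagonal argument then produces the required bounded sequence. This is precisely how the paper handles the forward implication.
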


\begin{proof}
Assume that $\varphi\in \Dom\dq_s^{(p)}$. So there exists a sequence of forms $\varphi_j\in C^\infty_{0,q}(D)$, $\varphi_j\in\Dom\dq_w^{(p)}$,
with support away from the isolated singularity at the origin and
such that \eqref{eq:appr1}, \eqref{eq:appr2} holds.
By smoothing with Dirac sequences (on the smooth manifold $X^*$),
we can assume that the $\varphi_j$ are bounded (actually even $\varphi_j\in C^\infty_{0,q}(\overline{D})$).
More precisely, because it has support away from the singularity, a fixed $\varphi_j$ can be approximated
in the graph norm \eqref{eq:appr1}, \eqref{eq:appr2} by forms in $C^\infty_{0,q}(\overline{D})$
by the procedure described in \cite{Alt}, Lemma A 6.7.

For the converse statement, let $\epsilon>0$. Choose $\varphi_j$ such that
\begin{eqnarray}\label{eq:appr01}
\|\varphi-\varphi_j\|_{L^p(D)} < \epsilon/3  &\mbox{ and }& \|\dq_w \varphi - \dq_w \varphi_j\|_{L^p(D)} < \epsilon/3.
\end{eqnarray}
Now use the fact that $\varphi_j$ is bounded and
Lemma \ref{lem:cut-off} (with $p=\infty$ and $\lambda=2n$) to choose $k\geq 0$
such that
\begin{eqnarray}\label{eq:appr02}
\|\varphi_j-\mu_k \varphi_j\|_{L^p(D)} < \epsilon/3  &\mbox{ and }& \|\dq_w \varphi_j - \dq_w (\mu_k \varphi_j)\|_{L^p(D)} < \epsilon/3.
\end{eqnarray}
Now then, $\mu_k \varphi_j$ has support away from the isolated singularity at the origin,
so we can use the procedure from above (\cite{Alt}, Lemma A 6.7)
to find a smooth form $\varphi_\epsilon \in C^\infty_{0,q}(\overline{D})$
with support away from the origin such that
\begin{eqnarray}\label{eq:appr03}
\|\mu_k \varphi_j- \varphi_\epsilon\|_{L^p(D)} < \epsilon/3  &\mbox{ and }& \|\dq_w (\mu_k \varphi_j) - \dq_w \varphi_\epsilon\|_{L^p(D)} < \epsilon/3.
\end{eqnarray}

Combining \eqref{eq:appr01}, \eqref{eq:appr02} and \eqref{eq:appr03},
we have seen that there exists for any $\epsilon>0$ a smooth form $\varphi_\epsilon$ with support
away from the singularity such that
\begin{eqnarray*}
\|\varphi-\varphi_\epsilon\|_{L^p(D)} < \epsilon  &\mbox{ and }& \|\dq_w \varphi - \dq_w \varphi_\epsilon\|_{L^p(D)} < \epsilon.
\end{eqnarray*}
This means nothing else but $\varphi\in \Dom\dq_s^{(p)}$.
\end{proof}

\medskip

\section{The Andersson-Samuelsson integral operator for affine cones over smooth projective complete intersections}
\label{sec:main}

\smallskip
\subsection{The Koppelman integral operator for a reduced complete intersection}

For convenience of the reader, let us recall shortly the definition of the Koppelman integral operators from \cite{AS}
in the situation of a reduced complete intersection $X \subseteq \C^N$ of dimension $n = N-\nu$,
defined by $X = \{ \zeta \in \C^N \mid f(\zeta) = 0 \}$, for some tuple $f = (f_1,\dots,f_\nu)$ of holomorphic functions on $\C^N$.
Let $\Omega \subset\subset \Omega' \subset \subset \C^N$ be two strictly pseudoconvex domains,
and let $D := X \cap \Omega$ and $D' := X \cap \Omega'$.

Let $\omega_X$ be a structure form on $X$ (see \cite{AS}, Section 3).
The structure form $\omega_X$ is essentially the pull-back of
\begin{equation}\label{eq:structure-form}
    \frac{\sum_I \overline{\det \frac{\partial f}{\partial \zeta_I}} \widehat{d\zeta_I}}{\| m(\nu,\frac{\partial f}{\partial \zeta})\|^2}
\end{equation}
to $X$, the sum is over all $\nu$-tuples $I = (I_1,\dots,I_\nu)$, where $1 \leq I_1 < \dots < I_\nu \leq N$, and 
where $\widehat{d\zeta_I}$ means that we have removed the factor $d\zeta_{I} := d\zeta_{I_1}\wedge \dots \wedge d\zeta_{I_p}$
from $d\zeta_1 \wedge \dots \wedge d\zeta_N$, and the sign is such that $d\zeta_I \wedge \widehat{d\zeta_I} = d\zeta_1 \wedge \dots \wedge d\zeta_N$ 
(there are also some scalar constants and a fixed frame of a trivial line bundle),
and $m(\nu,\partial f/\partial \zeta)$ denotes the tuple of all $(\nu\times\nu)$-minors of $\partial f/\partial \zeta$.
The Koppelman integral operator $\mathcal{K}$, which is a homotopy operator for the $\dq$-equation on $X$,
is of the form
\begin{equation}\label{eq:AS1}
    (\mathcal{K} \varphi)(z) = \int_{D'} K(\zeta,z) \wedge \varphi(\zeta),
\end{equation}
which takes forms on $D'$ as its input, and outputs forms on $D$.
Here,
\begin{equation} \label{eq:Kdef}
    K(\zeta,z) = \omega_X(\zeta) \wedge \tilde{K}(\zeta,z),
\end{equation}
and $\tilde{K}$ is defined by
\begin{equation*}
    \tilde{K}(\zeta,z) \wedge d\eta_1 \wedge \dots \wedge d\eta_N = h \wedge (g\wedge B)_n,
\end{equation*}
where $(g\wedge B)_n$ denotes the part of $g\wedge B$ of bidegree $(n,*)$, $\eta_i = \zeta_i - z_i$.
The \emph{Hefer form} $h$ is a $(\nu,0)$-form $h = h_1 \wedge \dots \wedge h_\nu$, where $h_i$ is a $(1,0)$-form
satisfying $\delta_\eta h_i = f_i(\zeta) - f_i(z)$ where $\delta_\eta$ is the interior multiplication with
$$2\pi i \sum \eta_j \frac{\partial}{\partial \eta_j} = 2\pi i \sum (\zeta_j - z_j) \frac{\partial}{\partial \eta_j},$$
and we write $h_i = \sum h^j_i d\eta_j$.
The form $g$ is a so-called weight with compact support, defined as follows.
Let $\chi(\zeta)$ be a cut-off function with compact support in $\Omega'$, which is $\equiv 1$ in a neighborhood of $\Omega$,
and let $s(\zeta,z) = \sum s_i(\zeta,z) d\eta_i$ be a $(1,0)$-form such that $\delta_\eta s = 1$, and
which is smooth in $\zeta$ for $\zeta \in \supp \chi'(\zeta)$, and holomorphic in $z \in \Omega$.
Then
\begin{equation*}
    g := \chi - \dq \chi \wedge \big(s+ s(\dq s) + \dots + s(\dq s)^{n-1}\big).
\end{equation*}
If $\Omega$ is the unit ball $B_1(0) \subseteq \C^N$, then (using the general notation $x \bullet y=x_1\cdot y_1+ ... + x_N \cdot y_N$)
one choice of $s$ is
\begin{equation*}
    \sigma = \frac{\overline{\zeta} \bullet d\eta}{2\pi i(\|\zeta\|^2-\bar{\zeta}\bullet z)}.
\end{equation*}
The Bochner-Martinelli form $B$ is defined by
\begin{equation*}
    B := b + b\dq b + \dots + b(\dq b)^{n-1},
\end{equation*}
where
\begin{equation*}
    b := \frac{\partial \|\eta\|^2}{\|\eta\|^2} = \frac{\bar{\eta}\bullet d\eta}{\|\eta\|^2}.
\end{equation*}

We thus get that $\tilde{K}$ is a sum of terms of the forms
\begin{equation*}
    \chi(\zeta) \frac{(\overline{\zeta_i - z_i})}{\|\zeta-z\|^{2n}} h_j(\zeta,z) \widehat{ d\overline{\eta_i}}
\end{equation*}
and
\begin{equation*}
    \dq\chi(\zeta) \frac{\overline{\zeta_i - z_i}}{\|\zeta-z\|^{2\ell}} h_j(\zeta,z)
    s_k(\zeta,z) \widehat{ d\overline{\eta}_i \wedge d\overline{\eta}_k}.
\end{equation*}
Note that since $s_k(\zeta,z)$ is bounded for $z \in \overline{D}$ and $\zeta \in \supp \chi'(\zeta)$,
$\tilde{K}$ is a sum of terms of the form
\begin{equation} \label{eq:Ktildeterm}
    v_j(\zeta,z) \frac{(\overline{\zeta_i - z_i})}{\|\zeta-z\|^{2n}} h_j(\zeta,z) \widehat{ d\overline{\eta_i}},
\end{equation}
where $v_j(\zeta,z) \in L^\infty(D \times D')$.

If $X$ is the affine cone over a smooth projective complete intersection $Y$, this means that we can choose $f$ such that
$f = (f_1,\dots,f_\nu)$, where $f_1,\dots,f_\nu$ are homogeneous polynomials of degree $d_1,\dots,d_\nu$,
and we let $d := d_1 + \dots + d_\nu$, where $d$ is the degree of $Y$.

Since the rows of the $(\nu \times N)$-matrix  $\frac{\partial f}{\partial \zeta}$ are $(d_i-1)$-homogeneous polynomials,
all $(\nu\times\nu)$-minors of $(\partial f)/(\partial \zeta)$ are $(d-\nu)$-homogeneous polynomials in $\zeta$.
The fact that $Y$ is smooth means that $X$ has an isolated singularity at $\{ 0 \}$.
In addition, this means that the common zero-set of the tuple $m(\nu,\frac{\partial f}{\partial \zeta})$ is just the origin.
Since
\begin{equation*}
    \left\|m\left(\nu,\frac{\partial f}{\partial \zeta}(\lambda \zeta)\right)\right\| = \|\lambda\|^{d-\nu}
    \left\|m\left(\nu,\frac{\partial f}{\partial \zeta}(\zeta)\right)\right\|
\end{equation*}
and since $\|m(\nu,\frac{\partial f}{\partial \zeta})\|$ only vanishes at the origin, we get that
\begin{equation*}
    \left\|m\left(\nu,\frac{\partial f}{\partial \zeta}(\zeta)\right)\right\| \sim \| \zeta \|^{d-\nu}.
\end{equation*}
By \eqref{eq:structure-form}, we then get that if we write $\omega = \sum \omega_I \widehat{d\zeta_I}$, then
\begin{equation} \label{eq:str-form-estimate}
    \|\omega_I(\zeta)\| \leq \frac{1}{\|\zeta\|^{d-\nu}}.
\end{equation}
Note also that using $\zeta^k-z^k = (\zeta-z)(\zeta^{k-1} + \zeta^{k-2} z + \dots + z^{k-1})$,
one can chose the Hefer forms $h_i = \sum h^j_i d\eta_j$ such that $h_i^j(\zeta,z)$ are homogeneous polynomials
in $(\zeta,z)$ of degree $d_i-1$. Thus, if we write $h = \sum h_I d\eta_{I}$, then
\begin{equation} \label{eq:hefer}
    |h_I(\zeta,z)| \leq \sum_{\gamma=0}^{d-\nu} \|\zeta\|^{d-\nu-\gamma} \|z\|^\gamma.
\end{equation}
To conclude, using \eqref{eq:Ktildeterm}, \eqref{eq:str-form-estimate} and \eqref{eq:hefer},
the kernel $K(\zeta,z)$ given by \eqref{eq:Kdef} can be expressed as a sum of terms of the form
\begin{equation} \label{eq:Kterm}
    w(\zeta,z) \frac{\|z\|^\gamma}{\|\zeta\|^\gamma} \frac{(\overline{\zeta_j - z_j})}{\|\zeta-z\|^{2n}} \widehat{ d\overline{\eta_J}}
    \wedge \widehat{d\zeta_I},
\end{equation}
where $\gamma \in \{0,\dots,d-\nu\}$ and $w(\zeta,z) \in L^\infty(D\times D')$.

The projection operator $\mathcal{P}$ is defined by
\begin{equation}\label{eq:AS2}
    (\mathcal{P} \varphi)(z) = \int_{D'} P(\zeta,z) \wedge \varphi(\zeta),
\end{equation}
where the integral kernel $P(\zeta,z)$ is defined in a similar way to \eqref{eq:Kdef},
namely,
\begin{equation*}
    P(\zeta,z) = \omega_X(\zeta) \wedge \mathcal{P}(\zeta,z),
\end{equation*}
where
\begin{equation*}
    \tilde{P}(\zeta,z) \wedge d\eta_1 \wedge \dots \wedge d\eta_N = h \wedge g_n,
\end{equation*}
cf. \cite[(5.5)]{AS}. Since $g_n = \dq\chi \wedge s \wedge (\dq s)^{n-1}$,
it has support on $\supp \dq\chi$, where $s$ is smooth in $\zeta$ and holomorphic
in $z$. If we thus assume that $X$ has an isolated singularity inside $D$, then
$\omega(\zeta)$ is smooth on $\supp g_n$, so to conclude,
$P(\zeta,z)$ is smooth in $\zeta$ and $z$, and with compact support in $\zeta$.

\subsection{Mapping properties of the Andersson-Samuelsson Koppelman integral operator}

\smallskip
\begin{proof}[Proof of Theorem~\ref{thm:main1}]
    Due to Lemma~\ref{lma:Lpformsfunctions} and the form \eqref{eq:Kterm} of the integral kernel $K(\zeta,z)$,
    in order to prove that $\mathcal{K}$ give continuous linear maps $L^p_{0,q}(D') \to L^p_{0,q-1}(D)$ and
    $L^\infty_{0,q}(D') \to C_{0,q-1}^\alpha(D)$,
    is enough to prove that integral kernels of the form
    \begin{equation*}
        k_\gamma(\zeta,z) := \frac{(\overline{\zeta_i - z_i})}{\|\zeta-z\|^{2n}} \frac{\|z\|^{\gamma}}{\|\zeta\|^{\gamma}}
    \end{equation*}
    give continuous linear maps $L^p(D') \to L^p(D)$ and $L^\infty(D') \to C^\alpha(D)$,
    where $0 \leq \gamma \leq d-\nu$ is an integer.
    This is Theorem~\ref{thm:lp-estimate} and Theorem~\ref{thm:C0-estimate}, which also give compactness when $p = \infty$.
    It just remains to prove compactness of $\mathcal{K}$ as a continuous linear map $L^p(D') \to L^p(D)$ when $p < \infty$.
    If an integral operator is defined by a bounded integral kernel, it maps $L^p(D') \to L^p(D)$ compactly, see for example
    \cite[Appendix~B]{Ra}. By Lemma~\ref{lem:cpt-cutoff-estimate}, $\mathcal{K}$ can thus be approximated by compact operators,
    and thus, $\mathcal{K}$ is also compact.

    Finally, since $\mathcal{P}$ is defined by a smooth integral kernel with compact support in $\zeta$,
    it maps $L^1(D')$ to $C^{0,1}(\overline{D})$, since
    \begin{equation*}
        |\mathcal{P} \varphi(z)| \leq \|P(\zeta,z)\|_{L^\infty(D'\times D)} \|\varphi\|_{L^1(D')}
    \end{equation*}
    and
    \begin{equation*}
        |\mathcal{P} \varphi(z) - \mathcal{P} \varphi(w)| \leq 
        \|z-w\|\|\frac{\partial P}{\partial \eta}(\zeta,\eta)\|_{L^\infty(D' \times D)} \|\varphi\|_{L^1(D')},
    \end{equation*}
    and it is compact by the Arzel\`a-Ascoli theorem.
\end{proof}

\smallskip
\begin{proof}[Proof of Theorem~\ref{thm:main3}]
    We let $\varphi_k := \mu_k \varphi$ where $\{\mu_k\}_k$ is the cut-off sequence from Section \ref{ssec:cut-off}.
    As in the proof of Theorem~1.3 in \cite{LR}, $\varphi_k$ can be approximated in $L^p(D')$ by smooth forms
    with support away from the origin,
    and using the Koppelman formula of Andersson-Samuelsson, which in particular holds
    for smooth forms, on this approximating sequence of smooth forms, and taking a limit, we get that
    \begin{equation*}
        \varphi_k = \dq \mathcal{K} \varphi_k + \mathcal{K} \dq \varphi_k
    \end{equation*}
    if $q \geq 1$, or
    \begin{equation*}
        \varphi_k = \mathcal{P} \varphi_k + \mathcal{K} \dq \varphi_k
    \end{equation*}
    if $q = 1$.
    
    Note that $\varphi_k \to \varphi$ in $L^p_{0,q}(D')$, and $\mathcal{K}$ maps continuously $L^p_{0,q}(D') \to L^p_{0,q-1}(D)$
    by use of Theorem \ref{thm:main1} (as $\frac{2n}{2n-(d-\nu)} < \frac{2n}{2n-(d-\nu+1)} \leq p)$. So,  
    $\varphi_k \to \varphi$, $\dq \mathcal{K} \varphi_k \to \dq \mathcal{K} \varphi$ (if $q \geq 1$),
    and $\mathcal{P} \varphi_k \to \mathcal{P} \varphi$ (if $q = 0$) in the sense  of distributions on $D$. 
    Thus, it remains to show that $\mathcal{K} \dq \varphi_k \to \mathcal{K} \dq \varphi$
    in the sense of distributions. 
    We split this into two parts by using $\dq \varphi_k = \mu_k \dq \varphi + \dq \mu_k \wedge \varphi$.
    First, we have that $\mu_k \dq \varphi \to \dq\varphi$ in $L^p_{0,q}(D')$,
    and so $\mathcal{K} (\mu_k \dq \varphi) \to \mathcal{K} \dq \varphi$ in the sense of distributions by the argument above.
    It only remains to show that $\mathcal{K} ( \dq \mu_k \wedge\varphi) \to 0$ in the sense of distributions.
    
    To show this, it is convenient to consider the sequence of integral operators
    $$\mathcal{K}_k \varphi := \mathcal{K} ( \dq \mu_k \wedge\varphi)$$
    with integral kernels consisting of parts $\dq\mu_k(\zeta) \wedge k_\gamma(\zeta,z)$ (see the proof of Theorem \ref{thm:main1}).
    
    Using \eqref{eq:cutoff2} and arguing as in the proof of Theorem \ref{thm:main1},
    we see that it is enough to consider a sequence of kernels
    \begin{eqnarray*}
    t_k (\zeta,z) &=& \frac{\chi_k(\|\zeta\|)}{\|\zeta\| \big| \log\|\zeta\|\big|} \cdot \frac{1}{\|\zeta-z\|^{2n-1}} \frac{\|z\|^{\gamma}}{\|\zeta\|^{\gamma}},
    \end{eqnarray*}
    where $\chi_k$ is the characteristic function of $[e^{-e^{k+1}},e^{-e^k}]$ and $0 \leq \gamma\leq d-\nu$ is an integer.
    Thus, Theorem \ref{thm:lp-estimate2} yields $\mathcal{K}_k \varphi = \mathcal{K}(\dq \mu_k\wedge\varphi) \rightarrow 0$
    in $L^p_{0,q}(D)$ if $p < \infty$, and so clearly also in the sense of distributions.
    It is here where we need that $p\geq \frac{2n}{2n-(d-\nu+1)}$.
    In case $p = \infty$, then $\dq \mu_k \wedge \varphi \to 0$ in $L^{p'}_{0,q}(D)$ for any $p' \leq 2n$,
    and thus, as above, $\mathcal{K}_k \varphi \to 0$ in $L^{p'}$ for any $2n \geq p' \geq \frac{2n}{2n-(d-\nu+1)}$,
    and thus also as distributions.
     \end{proof}

\smallskip
\begin{proof}[Proof of Theorem~\ref{thm:main4}]
For $\varphi \in \Dom \dq_s^{(p)}$, let $\{\varphi_j\}_j$ be a sequence as in Lemma \ref{lem:cut-off2}.
We can assume that the $\varphi_j$ are smooth and with support away from the singularity $\{0\}$
(see the proof of Lemma \ref{lem:cut-off2}). Then
\begin{eqnarray*}
\varphi_j &=& \dq \mathcal{K} \varphi_j + \mathcal{K} \dq \varphi_j
\end{eqnarray*}
as in the proof of Theorem \ref{thm:main3}. By the mapping properties of $\mathcal{K}$, Theorem \ref{thm:main1},
we have that $\mathcal{K} \varphi_j \to \mathcal{K}\varphi$ and $\mathcal{K}\dq \varphi_j \to \mathcal{K} \dq \varphi$
in $L^p(D)$. This implies that $\mathcal{K}\varphi\in \Dom \dq_w^{(p)}$ and
\begin{eqnarray*}
\dq \mathcal{K} \varphi &=& \varphi - \mathcal{K} \dq \varphi
\end{eqnarray*}
in the sense of distributions on $X$. As the $\varphi_j$ are bounded, $\{ \mathcal{K} \varphi_j\}_j$ is a sequence of bounded forms
with $\mathcal{K} \varphi_j \to \mathcal{K} \varphi$ and $\dq \mathcal{K} \varphi_j \to \dq \mathcal{K}\varphi$ in $L^p(D)$.
Hence, we obtain $\mathcal{K}\varphi \in \Dom \dq_s^{(p)}$ by Lemma \ref{lem:cut-off2}.
\end{proof}

\bigskip

\bigskip
{\bf Acknowledgments.}
This research was supported by the Deutsche Forschungsgemeinschaft (DFG, German Research Foundation), 
grant RU 1474/2 within DFG's Emmy Noether Programme.
The first author was supported by the Swedish Research Council.
The authors wish to thank the unknown referee for the careful reading and some suggestions
which helped to improve the readability of the paper.

\end{document}